\newtheorem{teo}{Theorem}
\newtheorem{lem}[teo]{Lemma}
\theoremstyle{definition}
\newtheorem{ex}[]{Example}
\newtheorem*{exSIM}{Example~\ref{ex:SIM} {\normalfont (continued)}}
\theoremstyle{remark}
\newtheorem{oss}[teo]{Remark}
\newtheorem*{oss*}{Remark}
\newcommand{\N}{\mathbb N}
\newcommand{\R}{\mathbb R}
\newcommand{\C}{\mathbb C}
\providecommand{\scal}[2]{\langle#1,#2\rangle}
\newcommand{\cA}{{\mathcal A}}
\newcommand{\cD}{{\mathcal D}}
\newcommand{\cF}{{\mathcal F}}
\newcommand{\cG}{{\mathcal G}}
\newcommand{\cH}{{\mathcal H}}
\newcommand{\cI}{{\mathcal I}}
\newcommand{\cJ}{{\mathcal J}}
\newcommand{\cQ}{{\mathcal Q}}
\newcommand{\cR}{{\mathcal R}}
\newcommand{\cS}{{\mathcal S}}
\newcommand{\cW}{{\mathcal W}}
\newcommand{\email}[1]{\protect\href{mailto:#1}{#1}}
\newcommand{\D}{{\rm d}}
\newcommand{\E}{{\rm e}}
 \newcommand{\gray}[1]{{\color{gray}#1}}
 \newcommand{\norm}[1]{{\left\Vert {#1}\right\Vert}}
 \newcommand{\nt}{w}
 \newcommand{\ct}{c}
\begin{document}
\selectlanguage{english}
\title{ Unitarization and Inversion Formulae for\\ the Radon Transform between Dual Pairs}

\author{Giovanni S.\ Alberti\thanks{Department of Mathematics, University of Genoa, Via Dodecaneso 35, 16146 Genova, Italy (\email{alberti@dima.unige.it}, \email{bartolucci@dima.unige.it}, \email{demari@dima.unige.it}, \email{devito@dima.unige.it}).}
\and  Francesca Bartolucci\footnotemark[1]
\and Filippo De Mari\footnotemark[1]
\and Ernesto De Vito\footnotemark[1]}

\maketitle

\abstract{We consider the Radon transform associated to dual
    pairs $(X,\Xi)$ in the sense of Helgason, with $X=G/K$ and
    $\Xi=G/H$, where  $G=\R^d\rtimes K$,  $K$ is a  closed
      subgroup of ${\rm GL}(d,\R)$ and $H$ is a closed subgroup of
      $G$. Under some technical assumptions, we   prove that if the quasi regular
    representations  of $G$ acting on $L^2(X)$ and $L^2(\Xi)$ are irreducible,
    then the Radon transform admits a unitarization  intertwining the two 
representations.  If, in addition, the representations are square integrable, we provide
an inversion formula for the Radon transform based on the voice transform associated
to these representations.}

\noindent\textit{Key words.} Homogeneous spaces, Radon transform, spherical means Radon transform, dual pairs, square-integrable representations, inversion formula, wavelets, shearlets.
\vspace{2mm}

\noindent\textit{Mathematics Subject Classification}. 44A12, 42C40, 22D10. 
\section{Introduction}

In a remarkable series of papers (see, e.g., \cite{helgason65,helgason65_2}),
for the most part subsumed in the monographs \cite{helgason84,helgason94,helgason99,helgason11},
 Helgason has developed a broad theory of Radon transforms in a differential
geometric setup. In this paper we show that the above framework is particularly appropriate in order to
treat in a unified way some results concerning unitarizability
features and inversion formulae of various types of Radon transforms \cite{bardemadeviodo,
  holschneider91} and permits to handle a significant number
of other  interesting examples.

One of the central notions in Helgason's theory is that of dual
  pair $(G/K,G/H)$ of homogeneous spaces of the same Lie group $G$, 
where $K$ and $H$ are closed subgroups of $G$. The transitive $G$-space $X=G/K$ is
meant to describe the ambient in which the functions to be analyzed
live, prototypically a space of constant curvature like the Euclidean
plane, or the sphere $S^2$ or the hyperbolic plane ${\mathbb H}^2$. A large and
important part of Helgason's work is devoted to the case when $X$ is
actually a symmetric space, whence the notation $G/K$ that we
retain. The second transitive $G$-space  $\Xi=G/H$ is meant to parametrize the set of
submanifolds of $X$ over which one wants to integrate functions, for
instance hyperplanes in Euclidean space, great circles in $S^2$,
geodesics or horocycles in ${\mathbb H}^2$.  With this basic
understanding in mind, the notion
of incidence between $x\in X$ and $\xi\in\Xi$ translates the
intuition that $x=g_1K$ is a point of $\xi=g_2H$ and amounts to the
fact that $g_1K\cap g_2H\neq\emptyset$. In this way any element
$\xi\in\Xi$ may be realized as a submanifold $\hat\xi\subset X$ simply
by taking all the points $x\in X$ that are incident to $\xi$;
conversely, one builds the ``sheaf of manifolds'' $\check x$ through
the point $x\in X$ by taking all the points $\xi\in\Xi$ that are
incident to $x$. If the maps $\xi\mapsto \hat\xi$ and
$x\mapsto\check x$ are injective, then $(X,\Xi)$ is a dual pair. Under
this assumption, the Radon transform $\mathcal{R}$ takes functions on
$X$ into functions on $\Xi$ and is abstractly defined by
\[ 
\mathcal{R}f(\xi)=\int_{\hat{\xi}}\,f(x) {\rm d}m_\xi(x),
\]
provided that, for all $\xi\in \Xi$, $m_\xi$ is a suitable measure on
the manifold 
$\hat{\xi}$ and the right hand side is meaningful, possibly in some weak
sense. The first requirement
is achieved by observing that, denoted by $\xi_0=eH$ the
origin of $\Xi$, it is easy to check that $\widehat{\xi_0}\subset X$ is actually a
transitive $H$-space, hence $\widehat{\xi_0}$ carries a measure $m_0$ which is quasi-invariant
with respect to the $H$-action \cite{varadarajan85}. Via the
$G$-action, for all $\xi\in\Xi$ the measures  $\{m_\xi\}$ on
$\{\hat\xi\}$
are defined as the push-forward of $m_0$.  

As for the right space of functions  $f\colon X\to\C$ for which 
the Radon transform makes sense, a natural choice is the  $L^2$
setting. Indeed,  both $X$ and $\Xi$ are transitive spaces,  so that
there exist quasi-invariant measures ${\rm d}x$ and ${\rm d}\xi$.
In this context, a central issue is to prove that the Radon transform, up
to a composition with a suitable pseudo-differential operator, can be
extended to a unitary map $\cQ$ from $L^2(X,{\rm d}x)$ to
$L^2(\Xi,{\rm d}\xi)$ intertwining the quasi-regular representations $\pi$ and
$\hat{\pi}$ of $G$, which naturally act on $L^2(X,{\rm d}x)$ and
$L^2(\Xi,{\rm d}\xi)$, respectively. We stress that in Helgason's approach, it is assumed that
$m_0$, ${\rm d}x$ and ${\rm d}\xi$ are all invariant measures.  The
reader is again referred to Helgason's books for a thorough treatment, as
well as for the broad problem of the operator properties of
$\mathcal{R}$.

In this paper we address the special case in which the group $G$ is a
semidirect product of the form $\R^d\rtimes K$, 
where $K$ is a closed subgroup of ${\rm GL}(d,\R)$, and the
representations $\pi$ and $\hat{\pi}$ are both irreducible. Under some
technical assumptions 
that we describe below, we prove a unitarization result,
see~Theorem~\ref{ernestoduflomoore}. The proof is based on the 
generalization of Shur's lemma provided  by Duflo and Moore \cite{dumo76}. One of the
novelties of our treatment consists in making weaker assumptions on
$m_0$, ${\rm d}x$ and ${\rm d}\xi$, namely their relative
invariance instead of invariance.  This allows for considering a wider variety of cases, such
as wavelets and shearlets.
A well-known predecessor of Theorem~\ref{ernestoduflomoore} is
Theorem~4.1 in \cite{helgason99}, an alternative proof of which,
tailored to our particular viewpoint, is to be found
in~\cite{bardemadeviodo}.

If, in addition, we require that $\pi$ is square integrable (so that
$\hat{\pi}$ is square integrable, too) we derive a general
inversion formula for $\cR$ of the form 
\begin{equation}\label{eq:intro}
 f =  \int_G \chi(g) \scal{\cR f}{\hat{\pi}(g)\Psi} \,     \pi(g)\psi\ {\rm d}\mu(g),
\end{equation}
where $\chi$ is a character of $G$ and $\psi\in L^2(X,{\rm d}x)$ and $\Psi\in L^2(\Xi,{\rm d}\xi)$ are
suitable mother wavelets and the Haar integral is weakly convergent, see
Theorem~\ref{generalinversionformula}. We
stress that the coefficients $\scal{\cR f}{\hat{\pi}(g)\Psi}$ depend on $f$
only through its Radon transform $\cR f$, so that  the above equation allows to
reconstruct an unknown signal from its Radon transform by
computing the family of coefficients $\{\scal{\cR f}{\hat{\pi}(g)\Psi}\}_{g\in G} $. As it is clear from \eqref{eq:intro},  $\hat\pi$ is used as an ``analysis'' transform applied to $\cR f$ and $\pi$ as a ``synthesis'' transform to reconstruct $f$.  This kind of reconstruction
formulae is already known for the classical Radon trasform where $G$ is
the affine group of $\R^d$ associated with the multi-dimensional
wavelets, \cite{holschneider91,walnut-1993, olson-destefano-1994,
  berenstein-walnut-1994,madych-1999}, and for the affine Radon
transform where $G$ is the shearlet group \cite{bardemadeviodo}.

We illustrate the construction and the result with the examples where $G$ is either the similitude group of the plane
(with two different choices of $\Xi$) or the standard shearlet group \cite{lawakuwe05}, but
other cases could also be covered, such as the generalized shearlet
dilation groups \cite{futo16,albertietal17}. We believe that our
contribution may  be further substantiated with several other
examples and deepened in several directions.  In particular, it would be interesting to consider more general groups $G$ and to relax the assumption of irreducibility of the representations. This would allow us to include many other examples, such as the class of groups studied in \cite{alberti13,alberti14}.

\vskip0.2truecm
For clarity, we list the main assumptions that are made along the
way.  We consider:
\begin{itemize}
\item  a Lie group $G=\R^{d}\rtimes K$, where $K$ is a
  closed subgroup of ${\rm GL}(d,\R)$; 
\item the space $X=\R^d$, regarded as a transitive
  $G$-space  with respect to the canonical action of $G$ (denoted by $(b,k)[x]=b+kx$ for $(b,k)\in G$ and $x\in X$), equipped with the Lebesgue measure $\D x$; 
\item a smooth transitive $G$-space $\Xi$ (we denote the action of $G$ on $\Xi$ by $g.\xi$) with
  an origin  $\xi_0\in\Xi$ and $H$ the isotropy  at $\xi_0$; 
  \end{itemize}
and we assume that the following conditions hold true: 
\begin{enumerate}[label=(A\arabic*)]
\item\label{ass:dxdXI}   the space $\Xi$ carries a relatively
    $G$-invariant  measure ${\rm d}\xi$;
\item\label{ass:dm_0} denoting  the origin of $X$ by $x_0=0$, the $H$-transitive space
  \[
\hat\xi_0=H[x_0] \subset X
    \]
carries a relatively $H$-invariant measure $m_0$ with character $\gamma$;
\item\label{ass:sigma} there exists a Borel section $\sigma\colon\Xi\to G$ such that 
\[
(g,\xi)\mapsto\gamma\bigl(\sigma(\xi)^{-1}g\sigma(g^{-1}.\,\xi)   \bigr)
\]
extends to a positive character of $G$ independent of $\xi$;
\item \label{ass:dualpair}   the pair $(X,\Xi)$  is a dual pair in the
  sense of Helgason under the canonical isomorphism $X=G/K$ and $\Xi=G/H$;
\item\label{ass:irreducible}   the quasi-regular representation $\pi$ of $G$
    acting on $L^2(X,{\rm d}x)$  is irreducible and square-integrable;
\item\label{ass:irreducible1}  the quasi-regular representation $\hat{\pi}$
  of $G$ acting on   $L^2(\Xi,{\rm d}\xi)$ is irreducible;
\item\label{ass:A} there exists a non-trivial $\pi$-invariant
  subspace  $\cA \subseteq L^2(X, {\rm d}x)$ such that
\begin{align*}
  & f(\sigma(\xi)[\cdot])\in L^1(\hat{\xi}_0,m_0)  \quad 
  \text{for almost all }\xi\in\Xi, \\
 & \mathcal{R}f :=\int_{\hat{\xi}_0}  f(\sigma(\cdot)[x])  {\rm d}m_0(x)
 \in L^2(\Xi, {\rm d}\xi),
\end{align*}
for all
  $f\in\mathcal A$,  and the map $f\mapsto \mathcal{R}f$ is  a closable operator from
$\mathcal A$ to  $L^2(\Xi, {\rm d}\xi)$.
\end{enumerate}
We note that the condition that $(X,\Xi)$ is a dual pair (assumption~\ref{ass:dualpair}) will not
play any explicit role, as it is implied by \ref{ass:irreducible} and  \ref{ass:irreducible1} (see Remark~\ref{rem:dual} below). The assumption that $\pi$ is square-integrable is
needed only in Section~\ref{inversionsection}, whereas the condition
that $\hat{\pi}$ is irreducible is not needed  until Theorem~\ref{ernestoduflomoore}.

\section{Preliminaries}
\subsection{Notation}
We briefly introduce the notation. We set
$\R^{\times}=\R\setminus\{0\}$ and $\R^{+}=(0,+\infty)$. The Euclidean norm of a vector
$v\in\R^d$ is denoted by $|v|$ and its scalar product with $w\in\R^d$ 
by $v\cdot w$. For any $p\in[1,+\infty]$ we denote
by $L^p(\R^d)$ the Banach space of functions $f\colon\R^d\rightarrow\C$ that are $p$-integrable with respect to the Lebesgue measure $\D x$
and, if $p=2$, the corresponding scalar product and norm are
$\langle\cdot,\cdot\rangle$ and $\|\cdot\|$, respectively. If $E$
  is a Borel subset of $\R^d$, $|E|$ also denotes its Lebesgue measure.
The Fourier trasform is denoted by $\mathcal F$ both on
$L^2(\R^d)$ and on  $L^1(\R^d)$, where it is 
defined by
\[
\mathcal F f({\xi}\,)= \int_{\R^d} f(x) \E^{-2\pi i\,
  {\xi}\cdot x } \D{x},\qquad f\in L^1(\R^d).
\] 
If $G$ is a Lie group, we denote by $L^2(G)$ the Hilbert
space of square-integrable functions with respect to a left Haar
measure on $G$. If $X$ is a smooth
transitive $G$-space with origin $x_0$, denoted  by $g[x]$ the action of $G$ on $X$, a Borel measure $\mu$
 of $X$ is relatively invariant if there exists a positive  character  $\alpha$ of $G$ such
that for any measurable set $E\subset  X$ and $g\in G$ it holds
$\mu(g[E])= \alpha(g)\mu(E)$. Furthermore,  a Borel section is a measurable map
    $s\colon X\to G$ satisfying  $s(x)[x_0]=x$
and $s(x_0)=e$, with $e$ the neutral element of $G$; a Borel section always
exists since $G$ is second countable \cite[Theorem~5.11]{varadarajan85}. 
We denote the (real) general linear group of size
$d\times d$ by ${\rm GL}(d,\R)$.

\subsection{Dual homogeneous spaces of semidirect products}
In this section we recall the basic construct due to Helgason
\cite{helgason99} adapted to the context in  
 which we are interested, namely when the group $G$ is the semidirect
 product of the Euclidean space $\R^d$ with a closed subgroup
 $K$ of ${\rm GL}(d,\R)$.
 This structure  is enjoyed by several
 groups of interest in applications, such as the similitude group
 studied by Murenzi \cite{anmu96}, and the generalized shearlet
 dilation groups introduced by F$\"u$hr in \cite{fu98,futo16} for the
 purpose of generalizing the standard shearlet group introduced in
 \cite{lawakuwe05,dastte10}.  Whenever possible, we keep the notation as
 in~\cite{helgason99}.

We recall that $G=\R^d\rtimes K$ is the manifold
  $\R^{d}\times K$ endowed with the group operation
\[ 
(b_1,k_1)(b_2,k_2)=(b_1+k_1b_2,k_1k_2),\qquad b_1, b_2\in\R^d,\;k_1,k_2\in K,
\]
where  $kb$ is the natural linear action of  the matrix $k$ on the
column vector $b$, so that $G$ is a Lie group.
The inverse of an element in $G$ is given by
$(b,k)^{-1}=(-k^{-1}b,k^{-1})$.
A left Haar measure of $G$ is 
\begin{equation}\label{eq:haar}
{\rm d}\mu(b,k)=|\det k|^{-1}{\rm d}b{\rm d}k,
\end{equation}
where ${\rm d}b$ is the Lebesgue measure of $\R^d$
and ${\rm d}k$ is a left Haar measure on $K$.

The first transitive space we consider is  $X=\R^d$, regarded  as
smooth $G$-space with respect to the canonical action
\[
(b,k)[x]=b+kx, \qquad  (b,k)\in G,\; x\in X.
\]
The action is clearly transitive, the isotropy at the origin $x_0=0$ is the subgroup $\{(0,k):k\in K\}$
which we identify with $K$, so that $X\simeq G/K$. Furthermore, the map 
  \[
s\colon X\to G, \qquad s(x)=(x,\operatorname{I}_d),
  \]
is a Borel section and  the Lebesgue
measure ${\rm d}x$ on $X$ is a relatively $G$-invariant measure,
since for any measurable set $E\subseteq\R^d$ we have
\[|(b,k)[E]|=|b+kE|=|kE|=|\det k||E| .\]

We consider a second smooth transitive  $G$-space
  $\Xi$, whose action  is denoted 
  by
  \[ G\times\Xi\ni (g,\xi) \mapsto g.\,\xi\in\Xi. \]
  By Assumption~\ref{ass:dxdXI},  $\Xi$ admits a relatively invariant
measure ${\rm d}\xi$, which may be expressed by  the equality 
\begin{equation}
  \label{eq:1}
  \int_{\Xi} f( g^{-1}.\xi) \, {\rm d}\xi =\beta(g) \int_{\Xi} f( \xi) \, {\rm d}\xi,\qquad f\in L^1(\Xi, {\rm d}\xi),\;g\in G,
\end{equation}
where $\beta\colon G\to (0,+\infty)$ is a positive character of~$G$.

We fix an origin $\xi_0\in\Xi$, we denote  the isotropy at~$\xi_0$ by
$H$  and we put 
\[
\check{x}_0=K.\,\xi_0\subset\Xi,\qquad \hat{\xi}_0=H[x_0]\subset X.
\]
By definition, $\check{x}_0$ and $\hat{\xi}_0$ are $K$ and $H$
transitive spaces, respectively. In order to define the Radon transform we will make use of
  Assumption~\ref{ass:dm_0}, 
namely that  $\hat{\xi}_0$
carries a relatively $H$-invariant Radon measure~${\rm d}m_0$, that is
\begin{equation}
  \label{mzero}
  \int_{\hat\xi_0} f( h^{-1}[x]) \, {\rm d}m_0(x) =\gamma(h) \int_{\hat\xi_0}  f( x) \, {\rm d}m_0(x),\qquad f\in L^1(\hat{\xi_0},\D m_0),\;h\in H,
\end{equation}
where
$\gamma\colon H\to (0,+\infty)$ is a positive character of~$H$.  This
is a weaker assumption than in Helgason's approach,
in which $\hat{\xi}_0$ is assumed to admit a {\it bona fide} invariant measure for the $H$-action.

We fix a Borel section $\sigma\colon\Xi\to G$
 such that~\ref{ass:sigma} holds true. 
With an equivalent approach to that of Helgason's, we define the
sets
\begin{equation}\label{spreads}
\hat{\xi}=\sigma(\xi)[\hat{\xi}_0]\subset X,\qquad
\check{x}=s(x).\,\check{x}_0\subset \Xi,
\end{equation}
which are closed subsets  by~\cite[Lemma~1.1]{helgason99}.

Assumption~\ref{ass:dualpair} states that  $(X,\Xi)$ is a dual
  pair, which means that the maps $x\mapsto\check{x}$ and 
  $\xi\mapsto\hat{\xi}$ are both injective. This assumption is called
  transversality, see  
  \cite[Lemma~1.3]{helgason99} about an equivalent characterization.  Apart from
the cases considered below, the reader may consult~\cite{helgason99}
for  numerous examples of dual pairs $(X,\Xi)$.  
\begin{oss}\label{rem:dual}
As mentioned in the introduction, transversality is in fact implied by assumptions \ref{ass:irreducible} and \ref{ass:irreducible1} (we leave this investigation for more general groups and spaces to future work) and will never be used in the arguments below.
\end{oss}
The following example shows that the (classical) Radon transform
  can be obtained in this framework.
   Two other examples are illustrated in Section \ref{examples}.
\begin{ex}\label{ex:SIM}
The (connected component of the identity of the) similitude group $SIM(2)$ of the plane is $\R^2\rtimes K$, with $K=\{R_\phi A_a\in \text{GL}(2,\R):\phi\in [0,2\pi),\, a\in\R^+\}$ where \[
R_{\phi}=\left[\begin{matrix}\cos{\phi} & -\sin{\phi} \\ \sin{\phi} & \cos{\phi}\end{matrix}\right],\qquad A_a=\left[\begin{matrix}a & 0 \\ 0 & a\end{matrix}\right].
\]
By the identification $K\simeq [0,2\pi)\times\R^{+}$, we write $(b,\phi,a)$ for the elements in $SIM(2)$. With this identification the group law becomes
\[
(b,\phi,a)(b',\phi',a')=(b+R_{\phi}A_ab', \phi+\phi'\ \text{mod}\ 2\pi,aa'),
\]
and the inverse of $(b,\phi,a)$ is given by
 \begin{equation}\label{eq:inverseSIM}
(b,\phi,a)^{-1}=(-A_a^{-1}R_{\phi}^{-1}b,-\phi\ \text{mod}\ 2\pi, a^{-1}).
\end{equation}
By \eqref{eq:haar}, a left Haar measure of $SIM(2)$ is 
\begin{equation}\label{eq:haarSIM}
\D\mu(b,\phi,a)=a^{-3}\D b\D \phi\D a,
\end{equation}
where $\D b$, $\D \phi$ and $\D a$ are the Lebesgue measures on
  $\R^2$, $[0,2\pi)$ and $\R_{+}$, respectively.

It remains to choose the space $\Xi$ and the corresponding subgroup $H$ of $SIM(2)$. The group $SIM(2)$ acts transitively on $\Xi=[0,\pi)\times\R$ by 
\[
(b,\phi,a).(\theta,t)=(\theta+\phi\ \text{mod}\ \pi,a(t+\nt(\theta)\cdot A_a^{-1}R_{\phi}^{-1}b)),
\]
where $\nt(\theta)={^t(\cos{\theta},\sin{\theta})}$, or equivalently 
\[
(b,\phi,a)^{-1}.(\theta,t)=\left(\theta-\phi\ \text{mod}\
  \pi,\frac{t-\nt(\theta)\cdot b}{a}\right).
\]
The isotropy at $\xi_0=(0,0)$ is 
\[
H=\{((0,b_2),\phi,a):b_2\in\R,\phi\in\{0,\pi\},a\in\R^+\}.
\]
Thus, $[0,\pi)\times\R=SIM(2)/H$.
An immediate calculation gives
\[
\int_\Xi f\left((b,\phi,a)^{-1}.(\theta,t)\right)\D\theta\D t=a \int_\Xi f\left(\theta,t\right)\D\theta\D t,\qquad f\in L^1(\Xi,\D\theta\D t),
\]
namely, \eqref{eq:1} is satisfied with the character $\beta(b,\phi,a)=a$. Thus, the Lebesgue measure $d\xi=\D\theta\D t$ is a relatively invariant measure on $\Xi$.

Consider now the sections $s\colon\R^2\to SIM(2)$ and $\sigma\colon [0,\pi)\times\R\to SIM(2)$ defined by
\[
s(x)=(x,0,1),\qquad
\sigma(\theta,t)=(t\, \nt(\theta),\theta,1).
\]
It is easy to verify by direct computation that
\begin{align*}
\hat{\xi}_0&=H[x_0]=\{(0, b_2): b_2\in\R\}\simeq\R,\\
 \check{x}_0&=K.\xi_0=\{(\theta,0):\theta\in[0,\pi)\}\simeq [0,\pi).
\end{align*}
It is immediate to see that the Lebesgue measure $\D b_2$ on
$\hat{\xi}_0$ is a relatively $H$-invariant measure
with character $\gamma((0, b_2),\phi,a)=a$. Further, we have that
\[
\widehat{(\theta,t)}=\sigma(\theta,t)[\hat{\xi}_0]=\{x\in\R^2:x\cdot \nt(\theta)=t\},
\]
which is the set of all points laying on the line of equation $x\cdot \nt(\theta)=t$ and 
\[
\check{x}=s(x).\check{x}_0=\{(\theta,t)\in [0,\pi)\times\R:t-\nt(\theta)\cdot x=0\},
\]
which parametrizes the set of all lines passing through the point $x$.

It is easy to verify that $X=\R^2$ and $\Xi=[0,\pi)\times\R$ are homogeneous spaces in duality. Indeed, the map $x\mapsto\check{x}$ which sends a point to the set of all lines passing through that point and the map $(\theta,t)\mapsto\widehat{(\theta,t)}$ which sends a line to the set of points laying on that line are both injective. 
\end{ex}

\subsection{The representations}
The group $G$ acts unitarily on  $L^2(X,{\rm d}x)$ via the quasi-regular representation defined by
\[
\pi(g)f(x)=|\det(k)|^{-1/2}f(k^{-1}(x-b)),\qquad g=(b,k).
\]
By
  Assumption~\ref{ass:irreducible},  $\pi$ is
irreducible and  square-integrable. We stress that this latter
condition is needed only in Section~\ref{inversionsection}.

The group $G$  acts also on $L^2(\Xi,{\rm d}\xi)$ via the quasi-regular representation 
\[
\hat{\pi}(g)F(\xi)=\beta(g)^{-1/2}F(g^{-1}.\,\xi),
\]
where $\beta(g)$  is defined in~\eqref{eq:1}. The representation
$\hat{\pi}$ is irreducible by~Assumption~\ref{ass:irreducible};
however this condition is not needed  until Theorem~\ref{ernestoduflomoore}.

\begin{exSIM}
The group $SIM(2)$ acts on $L^2(\R^2)$ by means of the unitary
irreducible representation
$\pi$ defined by
\begin{equation}\label{simrepresentation}
\pi(b,\phi,a)f(x)=a^{-1}f(A_a^{-1}R_{\phi}^{-1}(x-b)),
\end{equation}
or, equivalently, in the frequency domain
\begin{equation}\label{simrepresentationfrequency}
\mathcal{F}[\pi(b,\phi,a)f](\xi)=ae^{-2\pi ib\cdot\xi}\mathcal{F}f(A_aR_{\phi}^{-1}\xi).
\end{equation}
As above, we consider on $\Xi=[0,\pi)\times\R$ the measure ${\rm d}\xi={\rm d}\theta{\rm d}t$, where ${\rm d}\theta$ and ${\rm d}t$ are the Lebesgue measures on $[0,\pi)$ and $\R$ respectively. Then, since $\beta(b,\phi,a)=a$, $G$ acts on $L^2([0,\pi)\times\R,{\rm d}\theta{\rm d}t)$ by means of the quasi-regular representation $\hat{\pi}$ defined by
\begin{equation}\label{simrepresentationhat}
\hat{\pi}(b,\phi,a)F(\theta,t)=a^{-\frac{1}{2}}F\left(\theta-\phi\ \text{mod}\ \pi,\frac{t-\nt(\theta)\cdot b}{a}\right),
\end{equation}
which is irreducible, too. 
\end{exSIM}

\subsection{The Radon transform}

Following Helgason's theory and using the Borel section $\sigma$ in order to push-forward the measure ${\rm d}m_0$ (see \eqref{mzero}) to the manifolds $\hat{\xi}$ given in  \eqref{spreads}, we define the Radon transform of $f$ as the map $\mathcal{R}f\colon\Xi\to\C$ given  by
\begin{equation}\label{generalradon}
\mathcal{R}f(\xi)=\int_{\hat{\xi}}\,f(x) {\rm d}m_\xi(x):=\int_{\hat{\xi}_0}f(\sigma(\xi)[x]){\rm
  d}m_0(x) .
\end{equation}
Note that this depends intrinsically on the choices of $\D m_0$ and $\sigma$, and not only on the subset of integration  $\hat\xi$.

Assumption~\ref{ass:A} states that there exists a non-trivial $\pi$-invariant
  subspace $\mathcal A$ of $L^2(X, {\mathrm d}x)$ such that  $\mathcal{R}f$ is well defined for all
  $f\in\mathcal A$ and the Radon transform $\mathcal{R}$ is a closable operator from
  $\mathcal A$ into $L^2(\Xi,{\mathrm d}\xi)$.  We denote its closure by
  $\overline{\mathcal{R}}$. Clearly,  $\mathcal{R}f$ may be well defined
  for a larger subset of functions than   $\mathcal A$, however we will  show that
  $\overline{\mathcal{R}}$  is independent of the choice of $\mathcal A$.  


\begin{exSIM}
We compute by
\eqref{generalradon} the Radon transform between the homogeneous
spaces in duality $\R^2$ and $[0,\pi)\times\R$ and we obtain 
\begin{equation}\label{radonpolar}
\mathcal{R}^{\text{pol}}f(\theta,t)=\int_{\R}f(t\cos{\theta}-y\sin{\theta},t\sin{\theta}+y\cos{\theta}){\rm d}y,
\end{equation}
which is the so-called polar Radon transform.

Next we choose the domain $\cA$ of $\cR^{\rm pol}$, which will be called $\cA^{\rm pol}$. This requires to recall one of the
fundamental results in Radon theory, the so-called Fourier slice
theorem \cite{helgason99}, which relates the Radon transform with the Fourier
transform. For $f\in L^1(\R^2)$, the integral
  \eqref{radonpolar} converges for almost all
  $(\theta,t)\in[0,\pi)\times\R$ by Fubini's theorem and
  \begin{equation}
    \label{eq:FST}
(I\otimes\mathcal F )(\mathcal{R}^{{\rm
    pol}}f)(\theta,\tau)
=\mathcal{F}f(\tau\nt(\theta))  
  \end{equation}
 for  every $(\theta,\tau)\in [0,\pi)\times \R$, where $I$ is
the identity operator. Hence, we have
\begin{align*}
\int_{[0,\pi)\times\R}|\cR^{\rm pol}f(\theta,t)|^2\ {\rm d}\theta{\rm d}t&=\int_{[0,\pi)}\int_{\R}| (I\otimes\mathcal F ) (\mathcal{R}^{{\rm pol}}f)(\theta,\tau)|^2\ {\rm d}\tau{\rm d}\theta\\
&=\int_{[0,\pi)\times\R}|\cF f(\tau \nt(\theta))|^2{\rm d}\theta{\rm d}\tau\\
&=\int_{\R^2}\frac{|\cF f(\xi_1,\xi_2)|^2}{\sqrt{\xi_1^2+\xi_2^2}}{\rm d}\xi_1{\rm d}\xi_2.
\end{align*}
We are thus  led to choose
\[
\cA^{\text{pol}}=\{f\in L^1(\R^2)\cap L^2(\R^2):\int_{\R^2}\frac{|\cF f(\xi_1,\xi_2)|^2}{\sqrt{\xi_1^2+\xi_2^2}}{\rm d}\xi_1{\rm d}\xi_2<+\infty\},
\]
which by \eqref{simrepresentationfrequency} is $\pi$-invariant and by definition, $\cR^{\rm pol}f\in
L^2([0,\pi)\times\R)$ for all $f\in\cA^{\text{pol}}$. 

Now, we show that $\cR^{\rm
  pol}$ restricted to $\cA^{\text{pol}}$ is closable. Suppose that $(f_n)_n\subset\cA^{\rm pol}$ is a
sequence such that $f_n\to f$ in $L^2(\R^2)$ and $\cR^{\rm pol}f_n\to
g$ in $L^2([0,\pi)\times\R)$. Since $I\otimes\mathcal{F}$ is unitary
from $L^2([0,\pi)\times\R)$ onto $L^2([0,\pi)\times\R)$, we have that $(I\otimes\mathcal{F})\cR^{\rm
  pol}f_n\to(I\otimes\mathcal{F})g$ in $L^2([0,\pi)\times\R)$. Since
$f_n\in\cA^{\rm pol}$, by~\eqref{eq:FST}, for  every
$(\theta,\tau)\in[0,\pi)\times\R$ 
\begin{align*}
(I\otimes\mathcal{F})\cR^{\rm pol}f_n(\theta,\tau)&=\mathcal{F}f_n(\tau \nt(\theta)).
\end{align*}
Hence, passing to a subsequence if necessary,
\begin{align*}
\mathcal{F}f_n(\tau \nt(\theta))\to (I\otimes\mathcal{F})g(\theta,\tau)
\end{align*}
for almost every $(\theta,\tau)\in[0,\pi)\times\R$. Therefore, for almost every $(\theta,\tau)\in[0,\pi)\times\R$
\begin{align*}
(I\otimes\mathcal{F})g(\theta,\tau)=\lim_{n\to+\infty}\mathcal{F}f_n(\tau \nt(\theta))=\mathcal{F}f(\tau \nt(\theta)),
\end{align*}
where the last equality holds true using a subsequence if necessary. Therefore, if $(h_n)_n\in\cA^{\rm pol}$ is another sequence such that $h_n\to f$ in $L^2(\R^2)$ and $\cR^{\rm pol}h_n\to h$ in $L^2([0,\pi)\times\R)$, then, for almost every $(\theta,\tau)\in[0,\pi)\times\R$
\begin{align*}
(I\otimes\mathcal{F})h(\theta,\tau)=\mathcal{F}f(\tau \nt(\theta)).
\end{align*}
Therefore,
\[
(I\otimes\mathcal{F})g(\theta,\tau)=(I\otimes\mathcal{F})h(\theta,\tau)
\]
for almost every $(\theta,\tau)\in[0,\pi)\times\R$. Then $\lim_{n\to+\infty}\cR^{\rm pol}f_n=\lim_{n\to+\infty}\cR^{\rm pol}h_n$, and $\cR^{\rm pol}$ is closable.
\end{exSIM}

\section{The Unitarization Theorem}\label{mainsection}

Our construction is based on the following lemma, which shows that the
Radon transform intertwines the representations $\pi$ and $\hat\pi$ up
to a positive character of $G$.
For the classical Radon transform considered in Example~\ref{ex:SIM},
this result is a direct consequence of the behavior of $\cR^{\rm pol}$ under linear actions \cite[Chapter 2]{raka96}.
\begin{lem}\label{lem:R} 
 The Radon transform $\cR$ restricted to $\cA$
is a densely defined operator from $\cA$ into $L^2(\Xi,{\rm d}\xi)$
satisfying
\begin{equation}
\label{intertwining}
\mathcal{R}\pi(g) =\chi(g)^{-1}\hat{\pi}(g) \mathcal{R} , 
\end{equation}
for all  $g\in G$, where
\begin{equation}\label{chi}
\chi(g)=\beta(g)^{-1/2}|\det{(k)}|^{1/2}\gamma(g\sigma(g^{-1}.\xi_0))^{-1},\qquad g=(b,k)\in G.
\end{equation}
\end{lem}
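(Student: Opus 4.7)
The plan is to handle the two assertions separately. Density of $\cR$ on $\cA$ follows immediately from Assumption~\ref{ass:irreducible}: since $\cA$ is nontrivial and $\pi$-invariant, its closure is a closed $\pi$-invariant subspace of $L^2(X,\D x)$, which by irreducibility of $\pi$ must coincide with the whole space. In particular, $\cA$ is also $\pi$-invariant, so $\pi(g)f\in\cA$ for every $f\in\cA$ and the left-hand side of~\eqref{intertwining} is meaningful.

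For the intertwining relation I would compute directly. Fix $g=(b,k)\in G$ and $f\in\cA$. Writing $\pi(g)f(y)=|\det k|^{-1/2}f(g^{-1}[y])$ and substituting into the definition~\eqref{generalradon} of $\cR$ gives
\[
\cR(\pi(g)f)(\xi)=|\det k|^{-1/2}\int_{\hat\xi_0} f\bigl((g^{-1}\sigma(\xi))[x]\bigr)\,\D m_0(x).
\]
The key observation is that $g^{-1}\sigma(\xi)$ and $\sigma(g^{-1}.\xi)$ both send $\xi_0$ to $g^{-1}.\xi$, so they differ by an element of the isotropy $H$: setting $h_{g,\xi}:=\sigma(g^{-1}.\xi)^{-1}g^{-1}\sigma(\xi)\in H$ one has $g^{-1}\sigma(\xi)=\sigma(g^{-1}.\xi)\,h_{g,\xi}$. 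Insert this factorization, change variables $x\mapsto h_{g,\xi}[x]$ on $\hat\xi_0$, and invoke the quasi-invariance of $m_0$ from~\eqref{mzero} to pull a factor $\gamma(h_{g,\xi}^{-1})$ outside the integral; the remaining integral is $\cR f(g^{-1}.\xi)$.

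At this point Assumption~\ref{ass:sigma} does the decisive work. Since $h_{g,\xi}^{-1}=\sigma(\xi)^{-1}g\sigma(g^{-1}.\xi)$, the assumption states that $\gamma(h_{g,\xi}^{-1})$ extends to a positive character of $G$ independent of $\xi$; specializing to $\xi=\xi_0$ and using $\sigma(\xi_0)=e$ identifies this constant with $\gamma(g\sigma(g^{-1}.\xi_0))$. Finally, the definition of $\hat\pi(g)$ rewrites $\cR f(g^{-1}.\xi)=\beta(g)^{1/2}\,\hat\pi(g)\cR f(\xi)$, and assembling the three scalar factors $|\det k|^{-1/2}$, $\gamma(g\sigma(g^{-1}.\xi_0))$ and $\beta(g)^{1/2}$ yields exactly $\chi(g)^{-1}$ as in~\eqref{chi}.

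The main obstacle I expect is bookkeeping: one has to track the sign conventions carefully (the left Haar rule~\eqref{mzero} is stated for $h^{-1}[x]$, so the change of variables produces $\gamma(h_{g,\xi}^{-1})$ rather than $\gamma(h_{g,\xi})$, and this inversion is exactly what turns the cocycle into the quantity appearing in Assumption~\ref{ass:sigma}). Once this is in place, the somewhat unusual expression $\gamma(g\sigma(g^{-1}.\xi_0))$ in~\eqref{chi} is seen to be the unique value of the extended character, collapsing a manifestly $\xi$-dependent Jacobian into a global character of $G$.
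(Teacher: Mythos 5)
Your proof is correct and follows essentially the same route as the paper's: the density claim via irreducibility of $\pi$ and $\pi$-invariance of $\cA$, and the intertwining relation by factoring $g^{-1}\sigma(\xi)=\sigma(g^{-1}.\xi)h_{g,\xi}$ with $h_{g,\xi}\in H$ (the paper's $m(g,\xi)^{-1}$), applying the relative invariance \eqref{mzero} to extract $\gamma(h_{g,\xi}^{-1})$, and invoking Assumption~\ref{ass:sigma} with $\sigma(\xi_0)=e$ to identify the constant. The bookkeeping of the inversion and of the three scalar factors matches \eqref{chi} exactly.
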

With a slight abuse of notation, $\cR$ denotes both the Radon
  transform defined by~\eqref{generalradon} and its restriction to $\cA$.
\begin{proof}
By Assumption~\ref{ass:A}, $\cR$ is a well-defined operator from
  $\cA$ into $L^2(\Xi,{\rm d}\xi)$. 
We now  prove~\eqref{intertwining}.  By the $\pi$-invariance of $\mathcal A$, for $f\in \mathcal A$ and
$g=(b,k)\in G$ we have 
\begin{align*}
(\mathcal{R}\pi(b,k)f)(\xi)&=|\det{(k)}|^{-1/2}\int_{\hat{\xi}_0}f((b,k)^{-1}\sigma(\xi)[x]){\rm d}m_0(x)\\
&=|\det{(k)}|^{-1/2}\int_{\hat{\xi}_0}f(\sigma((b,k)^{-1}.\,\xi)\sigma((b,k)^{-1}.\,\xi)^{-1}(b,k)^{-1}\sigma(\xi)[x]){\rm d}m_0(x)\\
&=|\det{(k)}|^{-1/2}\int_{\hat{\xi}_0}f(\sigma((b,k)^{-1}.\,\xi)m((b,k),\xi)^{-1}[x]){\rm d}m_0(x),
\end{align*}
where
$m((b,k),\xi)^{-1}:=\sigma((b,k)^{-1}.\,\xi)^{-1}(b,k)^{-1}\sigma(\xi)$.
It is known that 
for any $g\in G$ and any $\xi\in\Xi$
\begin{equation}\label{Borel}
m(g,\xi)=\sigma(\xi) ^{-1}g \sigma(g^{-1}.\,\xi)\in H.
\end{equation} 
We show this  property for the reader's convenience. Indeed
\[
\sigma(\xi) ^{-1}g \sigma(g^{-1}.\,\xi).\xi_0= \sigma(\xi) ^{-1}g.( g^{-1}.\,\xi)=\sigma(\xi) ^{-1}.\xi=\xi_0,
  \]
so that $ m(g,\xi)\in H$.  Thus, using \eqref{mzero} we obtain
\begin{align*}
(\mathcal{R}\pi(b,k)f)(\xi)&=|\det{(k)}|^{-1/2}\gamma(m((b,k),\xi))\int_{\hat{\xi}_0}f(\sigma((b,k)^{-1}.\xi)[x]){\rm d}m_0(x).
\end{align*}
Then,
\begin{align*}
(\mathcal{R}\pi(b,k)f)(\xi)&=|\det{(k)}|^{-1/2}\gamma(m((b,k),\xi))(\mathcal{R}f)((b,k)^{-1}.\xi)\\
&=\beta(b,k)^{1/2}|\det{(k)}|^{-1/2}\gamma(m((b,k),\xi))\hat{\pi}(b,k)\mathcal{R}f(\xi).
\end{align*}
Thanks to assumption \ref{ass:sigma}, $(g,\xi)\mapsto\gamma(m(g,\xi))$ extends to a
positive character of $G$  independent of $\xi$. In particular, $\gamma(m(g,\xi))=\gamma(m(g,\xi_0))=\gamma(g\sigma(g^{-1}.\xi_0))$, and \eqref{intertwining} follows.

We finally prove that $\cA$ is dense. By assumption~\ref{ass:A},
the domain of $\mathcal{R}$ is $\pi$ invariant,  so that
$\pi(g)\overline{\mathcal{A}}\subset\overline{\mathcal{A}}$ for every
$g\in G$. Since $\mathcal A\neq\{0\}$ and $\pi$ is irreducible,
then  $\overline{\mathcal{A}}=L^2(X,{\rm d}x)$. 
\end{proof}

Observe that, if $\gamma$ extends to a positive character of $G$, then 
\[
\gamma(m(g,\xi))=\gamma(\sigma(\xi))^{-1}\gamma(g)\gamma(\sigma(g^{-1}.\,\xi))
\]
and the independence of $\xi$ is implied by the stronger condition
\[
\gamma(\sigma(g^{-1}.\,\xi))=\gamma(\sigma(\xi)),
\]
that must be satisfied for all $g\in G$ and $\xi\in\Xi$. This is equivalent to requiring that $\gamma(\sigma(\xi))=1$ for all $\xi\in\Xi$, which is true in  all our examples.

\begin{exSIM}
By $\eqref{intertwining}$ and $\eqref{chi}$ we have that 
\begin{align}\label{polarintertwining}
\mathcal{R}^{\text{pol}}\pi(b,\phi,a) &=\chi(b,\phi,a)^{-1}\hat{\pi}(b,\phi,a)\mathcal{R}^{\text{pol}},
\end{align}
where $\chi(b,\phi,a)=a^{-1/2}$ is a character of $G$, since $\beta(b,\phi,a)=\gamma(b,\phi,a)=a$ and $\det(R_\phi A_a)=a^2$. 
\end{exSIM}

The following result is the key of our construction. Recall that by Assumption~\ref{ass:A} the operator $\mathcal{R}\colon\cA\subseteq L^2(X,{\rm d}x)\to
  L^2(\Xi,{\rm d}\xi)$ is closable.  
  \begin{lem}\label{lem:Rbar}
The  closure $\overline{\mathcal{R}}$ of the Radon transform $\cR$ is a densely defined operator satisfying
\begin{equation}
  \label{character}
\overline{\mathcal{R}}\pi(g) =\chi(g)^{-1}\hat{\pi}(g) \overline{\mathcal{R}}, 
\end{equation}
for all $g\in G$, where $\chi$ is given by \eqref{chi}.
\end{lem}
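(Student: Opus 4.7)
The plan is to deduce \eqref{character} from Lemma~\ref{lem:R} by a routine closure argument, exploiting that the intertwiners $\pi(g)$ and $\hat{\pi}(g)$ are (unitary, hence) bounded and that $\chi(g)$ is a positive scalar. Density of $\overline{\cR}$ will be immediate, since its domain contains $\cA$, which is dense in $L^2(X,{\rm d}x)$ by Lemma~\ref{lem:R}.

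For the intertwining identity, I would fix $g\in G$ and $f$ in the domain of $\overline{\cR}$. By the definition of the closure there exists a sequence $(f_n)\subset\cA$ such that $f_n\to f$ in $L^2(X,{\rm d}x)$ and $\cR f_n\to\overline{\cR}f$ in $L^2(\Xi,{\rm d}\xi)$. Since $\cA$ is $\pi$-invariant by~\ref{ass:A}, the elements $\pi(g)f_n$ still lie in $\cA$, so Lemma~\ref{lem:R} applies and gives
\[
\cR\pi(g)f_n = \chi(g)^{-1}\hat{\pi}(g)\cR f_n.
\]
Continuity of $\pi(g)$ yields $\pi(g)f_n\to\pi(g)f$ in $L^2(X,{\rm d}x)$, while continuity of $\hat{\pi}(g)$ forces the right-hand side to converge to $\chi(g)^{-1}\hat{\pi}(g)\overline{\cR}f$ in $L^2(\Xi,{\rm d}\xi)$. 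By the very definition of the closure, this will show that $\pi(g)f$ belongs to the domain of $\overline{\cR}$ and that
\[
\overline{\cR}\pi(g)f = \chi(g)^{-1}\hat{\pi}(g)\overline{\cR}f.
\]

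The previous paragraph only establishes the inclusion $\overline{\cR}\pi(g)\supset \chi(g)^{-1}\hat{\pi}(g)\overline{\cR}$. To upgrade it to an equality of operators (with coinciding domains), I would run the same argument with $g^{-1}$ in place of $g$: if $\pi(g)f$ belongs to the domain of $\overline{\cR}$, then applying the step above to $\pi(g)f$ and $g^{-1}$ shows that $f=\pi(g^{-1})\pi(g)f$ also belongs there. Hence $\overline{\cR}\pi(g)$ and $\chi(g)^{-1}\hat{\pi}(g)\overline{\cR}$ share the same domain and agree on it, which is exactly~\eqref{character}. I do not expect any genuine obstacle here: the substantive work is already carried out in Lemma~\ref{lem:R}, and the passage to the closure is made routine by the unitarity of $\pi(g)$ and $\hat{\pi}(g)$ together with the scalar nature of $\chi(g)$.
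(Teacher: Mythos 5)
Your argument is correct and follows essentially the same route as the paper's proof: the paper likewise picks a sequence $(f_n)\subset\cA$ with $f_n\to f$ and $\cR f_n\to\overline{\cR}f$, uses the $\pi$-invariance of $\cA$ and Lemma~\ref{lem:R} to get $\cR\pi(g)f_n=\chi(g)^{-1}\hat{\pi}(g)\cR f_n$, and passes to the limit via the continuity of the unitaries. Your final step with $g^{-1}$, upgrading the inclusion $\overline{\cR}\pi(g)\supset\chi(g)^{-1}\hat{\pi}(g)\overline{\cR}$ to an equality of domains, is a small but welcome refinement that the paper leaves implicit.
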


\begin{proof}
Since $\mathcal{R}$ is densely defined by Lemma~\ref{lem:R},
its closure $\overline{\mathcal{R}}$ is a densely defined
operator from $L^2(X,{\rm d}x)$ to  $L^2(\Xi,{\rm d}\xi)$.
Now, we prove that  $\overline{\mathcal{R}}$
satisfies~\eqref{character}.
We start by proving that the domain of $\overline{\mathcal{R}}$ is
$\pi$-invariant.  Given  $f\in
\operatorname{dom}(\overline{\mathcal{R}})$,  let
$(f_n)_n\in\cA$  be a sequence such that $f_n\to f$ in $L^2(X,{\rm d}x)$ and  $ \mathcal{R}f_n\to\overline{\mathcal{R}}f$ in $L^2(\Xi,{\rm
  d}\xi)$. In order to prove that $\pi(g)f \in
\operatorname{dom}(\overline{\mathcal{R}})$ it is enough to show that
the sequence $(\mathcal{R}\pi(g)f_n)_n$ converges in $L^2(\Xi,{\rm
  d}\xi)$: indeed $\pi(g)f_n\in\cA$ for any $n\in\N$ because
$\mathcal{A}$ is $\pi$-invariant and $\pi(g)f_n\to\pi(g)f$ in
$L^2(X,{\rm d}x)$ because $\pi(g)$ is a unitary operator for any $g\in
G$. But then, by  property \eqref{intertwining} we have
that 
\begin{equation}\label{intertwiningclosure}
\begin{split}
\lim_{n\to+\infty}\mathcal{R}\pi(g)f_n&=\chi(g)^{-1}\lim_{n\to+\infty}\hat{\pi}(g)\mathcal{R}f_n\\
&=\chi(g)^{-1}\hat{\pi}(g)\lim_{n\to+\infty}\mathcal{R}f_n\\
&=\chi(g)^{-1}\hat{\pi}(g)\overline{\mathcal{R}}f,
\end{split}
\end{equation}
so that $(\mathcal{R}\pi(g)f_n)_n$ converges in $L^2(\Xi,{\rm d}\xi)$. Then, $\pi(g)f \in \operatorname{dom}(\overline{\mathcal{R}})$ and by definition $\mathcal{R}\pi(g)f_n\to\overline{\mathcal{R}}\pi(g)f$. Therefore, by \eqref{intertwiningclosure}, $\overline{\mathcal{R}}$ satisfies \eqref{character}.
\end{proof}

Our main theorem  is based on the  following classical result due to Duflo and Moore  \cite{dumo76}. According to  \cite{dumo76}, a densely defined closed operator
  $T$ from a Hilbert space $\mathcal{H}$ to another Hilbert space
  $\hat{\mathcal{H}}$ is called semi-invariant 
  with weight $\zeta$ if it satisfies  
\begin{align}\label{semiinvariant}
\hat{\pi}(g)T\pi(g)^{-1}=\zeta(g)T, \qquad g\in G,
\end{align}
where  $\zeta$ is a  character of $G$ and $\pi$ and $\hat{\pi}$ are unitary
representations of $G$ acting on $\cH$ and $\hat{\cH}$, respectively.
\begin{teo}[{\cite[Theorem 1]{dumo76}}]\label{duflomoore}
With the above notation, assume that $\pi$ is irreducible. Let $T$ be a
densely defined closed nonzero operator from $\mathcal{H}$ to
$\hat{\mathcal{H}}$, semi-invariant with weight $\zeta$.
\begin{enumerate}[label=(\roman*)]
\item Suppose that $\pi=\hat{\pi}$. Let $T'$ be another densely
  defined closed operator from $\mathcal{H}$ to $\mathcal{H}$,
  semi-invariant with weight $\zeta$. Then $T'$ is proportional to
  $T$. 
\item Let $T=\cQ|T|$ be the polar decomposition of $T$. Then
  $|T|$ is a positive selfadjoint operator in $\mathcal{H}$
  semi-invariant with weight $|\zeta|$, and $\cQ$ is a partial isometry
  of $\mathcal{H}$ into $\hat{\mathcal{H}}$, semi-invariant with weight
  $\zeta/|\zeta|$. 
\end{enumerate}
\end{teo}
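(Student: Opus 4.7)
The plan is to reduce Theorem~\ref{duflomoore} to Schur's lemma by means of the polar decomposition of $T$, transferring the semi-invariance from $T$ to its positive part $|T|$ and to the partial isometry $\cQ$ separately. The unitarity of $\pi$ and $\hat\pi$ is what makes this clean, as it lets us take adjoints without disturbing the shape of the semi-invariance relation.

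For part (ii), I would first take adjoints in the semi-invariance relation and invoke unitarity of $\pi$ and $\hat\pi$ to obtain that $T^*$ is semi-invariant from $\hat{\cH}$ to $\cH$ with weight $\overline\zeta$. Composing, $|T|^2=T^*T$ is positive self-adjoint and semi-invariant with weight $|\zeta|^2$; the uniqueness of the positive square root (applied via Borel functional calculus to the relation $(\pi(g)|T|\pi(g)^{-1})^2=|\zeta(g)|^2|T|^2$) then yields that $|T|$ is semi-invariant with weight $|\zeta|$. Inserting the polar decomposition $T=\cQ|T|$ into the semi-invariance of $T$ pins down the semi-invariance of $\cQ$ on $\overline{\operatorname{range}\,|T|}$, and $\cQ$ is zero on the $\pi$-invariant orthogonal complement $\ker|T|$, giving the weight $\zeta/|\zeta|$ claimed.

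For part (i), I would apply (ii) to both $T$ and $T'$, obtaining polar decompositions $T=\cQ|T|$ and $T'=\cQ'|T'|$ with $\cQ,\cQ'$ semi-invariant of weight $\zeta/|\zeta|$ and $|T|,|T'|$ of weight $|\zeta|$. The closed subspaces $\ker T$ and $\overline{\operatorname{range}\,T}$ are $\pi$-invariant, so by irreducibility and $T\neq 0$ they are $\{0\}$ and $\cH$ respectively, forcing $\cQ$ (and likewise $\cQ'$) to be unitary on $\cH$. Then $\cQ^*\cQ'$ is a bounded unitary that commutes with $\pi(g)$ for every $g$ (the weights cancel), so classical Schur delivers $\cQ'=c\,\cQ$ for some $c\in\C$ with $|c|=1$. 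For the positive parts, I would pass to the bounded unitary imaginary powers $|T|^{it}$ and $|T'|^{it}$, each of which is semi-invariant of weight $|\zeta|^{it}$ (by functional calculus applied to the weight $|\zeta|$-relation for $|T|$); running the same Schur argument on $|T|^{-it}|T'|^{it}$ gives $|T'|^{it}=d^{it}|T|^{it}$ for a constant $d>0$ (the one-parameter shape in $t$ being forced by continuity and the group law $|T|^{i(s+t)}=|T|^{is}|T|^{it}$), and uniqueness of imaginary powers then yields $|T'|=d|T|$. Combining, $T'=(cd)\,T$.

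The main technical obstacle is the unboundedness of $T$ and $T'$: one cannot directly form an intertwiner such as $T'T^{-1}$ and apply Schur's lemma to it. The decisive device is to extract bounded objects from the polar decomposition, namely $\cQ$ (which is in fact unitary under irreducibility, thanks to $\ker T=\{0\}$ and dense range) and the imaginary powers $|T|^{it}$, and to run Schur on those. A related subtlety is that the proportionality function $t\mapsto d^{it}$ coming from Schur applied to $|T|^{-it}|T'|^{it}$ must be identified as a genuine one-parameter subgroup of the unit circle arising from a positive real $d$; this is forced by strong continuity in $t$ together with multiplicativity, and this is what lets one recover $|T'|=d|T|$ as unbounded operators.
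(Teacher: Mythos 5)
The paper does not prove Theorem~\ref{duflomoore} at all: it is imported verbatim from Duflo and Moore \cite{dumo76} and used as a black box, so there is no in-paper argument to compare yours against. On its own merits, your sketch is a sound reconstruction of the standard proof (and is essentially the route Duflo and Moore themselves take): adjoints plus unitarity give the weight $\overline{\zeta}$ for $T^*$, hence $|\zeta|^2$ for $T^*T$, and uniqueness of the positive square root transfers this to $|T|$; feeding $T=\cQ|T|$ back into the semi-invariance relation fixes the weight of $\cQ$ on $\overline{\operatorname{Im}|T|}$, with the relation trivially true on the $\pi$-invariant kernel. For (i), reducing the unbounded comparison to Schur's lemma applied to the bounded unitaries $\cQ^*\cQ'$ and $|T|^{-it}|T'|^{it}$, identifying $t\mapsto c(t)$ as a continuous character of $\R$ equal to $d^{it}$, and invoking Stone's theorem to recover $|T'|=d|T|$, is exactly the right device for circumventing the unboundedness, and you correctly flag this as the crux. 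Two small points you should make explicit: the case $T'=0$ must be dispatched separately (your argument needs $\ker T'=\{0\}$ and dense range to make $\cQ'$ unitary, which uses $T'\neq 0$ together with irreducibility), and the functional-calculus identity $(|\zeta(g)|\,|T|)^{it}=|\zeta(g)|^{it}|T|^{it}$ needs $|T|$ injective, which again is supplied by irreducibility in part (i) but should be stated. Neither is a real gap.
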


By observing that Lemma \ref{lem:Rbar} shows that the Radon
  transform $\overline{\mathcal R}$  is a semi-invariant operator with
  weight given by~\eqref{chi}, we  are finally in a position
to state and prove our main 
result. We stress that its proof  does not use the transversality condition on $(X,\Xi)$ (assumption \ref{ass:dualpair}, cfr.\ Remark~\ref{rem:dual}) and the square-integrability of $\pi$; the irreducibility of $\hat{\pi}$ 
  is only needed in the last claim of the theorem.
  \begin{teo}\label{ernestoduflomoore}
There exists a unique positive self-adjoint operator
  \[ \mathcal{I}\colon \operatorname{dom}(\mathcal{I}) \supseteq
    \operatorname{Im}\overline{\mathcal{R}}\to L^2(\Xi,{\rm d}\xi), 
  \] 
semi-invariant with weight  $\zeta=\chi^{-1}$ with the property that 
 the composite operator $\cI \overline{\mathcal{R}}$ extends to
    an isometry
$\cQ\colon L^2(X,{\rm d}x)\to L^2(\Xi,{\rm d}\xi)$
intertwining $\pi$ and  $\hat{\pi}$, namely
   \begin{equation}\label{intertwiningU}
\hat{\pi}(g)\cQ\pi(g)^{-1}=\cQ,
\qquad g\in G.
\end{equation}
Furthermore, if $\hat{\pi}$ is irreducible, then $\cQ$ is a unitary operator
and $\pi$ and  $\hat{\pi}$ are equivalent representations.
\end{teo}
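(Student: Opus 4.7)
The proof hinges on applying the Duflo--Moore theorem (Theorem~\ref{duflomoore}(ii)) to the closed operator $\overline{\cR}$. Lemma~\ref{lem:Rbar} tells us that $\overline{\cR}$ is densely defined and closed, and rewriting~\eqref{character} in the form $\hat\pi(g)\overline{\cR}\pi(g)^{-1}=\chi(g)\overline{\cR}$ shows that $\overline{\cR}$ is semi-invariant with weight $\zeta=\chi$ in the sense of~\eqref{semiinvariant}. Assuming the nondegenerate case $\overline{\cR}\neq 0$, Theorem~\ref{duflomoore}(ii) applied to the polar decomposition $\overline{\cR}=\cV|\overline{\cR}|$ produces two objects: $|\overline{\cR}|=(\overline{\cR}^{*}\overline{\cR})^{1/2}$ is positive self-adjoint on $L^2(X,{\rm d}x)$ and semi-invariant with weight $|\chi|=\chi$, while $\cV$ is a partial isometry from $L^2(X,{\rm d}x)$ to $L^2(\Xi,{\rm d}\xi)$ semi-invariant with weight $\chi/|\chi|=1$, which is exactly the intertwining relation~\eqref{intertwiningU}.

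I next upgrade $\cV$ to an isometry using assumption~\ref{ass:irreducible}. The proof of Lemma~\ref{lem:Rbar} already shows that $\operatorname{dom}(\overline{\cR})$ is $\pi$-invariant, and the intertwining identity \eqref{character} gives $\overline{\cR}\pi(g)f=\chi(g)^{-1}\hat\pi(g)\overline{\cR}f=0$ whenever $\overline{\cR}f=0$. Hence $\ker\overline{\cR}$ is a closed $\pi$-invariant subspace, and the irreducibility of $\pi$ combined with $\overline{\cR}\neq 0$ forces $\ker\overline{\cR}=\{0\}$. Since $\ker\cV=\ker|\overline{\cR}|=\ker\overline{\cR}$, the initial space of $\cV$ is all of $L^2(X,{\rm d}x)$, so $\cV$ is an isometry, which we denote by $\cQ$.

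To construct $\cI$, I pass to the dual polar decomposition $\overline{\cR}=|\overline{\cR}^{*}|\cQ$, where $|\overline{\cR}^{*}|=(\overline{\cR}\overline{\cR}^{*})^{1/2}=\cQ|\overline{\cR}|\cQ^{*}$ is a positive self-adjoint operator on $L^2(\Xi,{\rm d}\xi)$. Using $\hat\pi(g)\cQ=\cQ\pi(g)$ together with the semi-invariance of $|\overline{\cR}|$, conjugating by $\hat\pi(g)$ yields $\hat\pi(g)|\overline{\cR}^{*}|\hat\pi(g)^{-1}=\chi(g)|\overline{\cR}^{*}|$. I then define $\cI:=|\overline{\cR}^{*}|^{-1}$ through the Borel functional calculus as the (generally unbounded) positive self-adjoint inverse, with $\operatorname{dom}(\cI)\supseteq\operatorname{Im}|\overline{\cR}^{*}|\supseteq\operatorname{Im}\overline{\cR}$; the semi-invariance $\hat\pi(g)\cI\hat\pi(g)^{-1}=\chi(g)^{-1}\cI$ then follows by functional calculus. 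Since $\cI|\overline{\cR}^{*}|$ acts as the identity on $\operatorname{Im}|\overline{\cR}^{*}|$, one gets $\cI\overline{\cR}=\cQ$ on $\operatorname{dom}(\overline{\cR})$, and as $\cQ$ is bounded on $L^2(X,{\rm d}x)$ this extends continuously to the desired isometry.

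For uniqueness, any candidate $\widetilde\cI$ satisfying all the stated conditions produces a factorization $\overline{\cR}=\widetilde\cI^{-1}\widetilde\cQ$ with $\widetilde\cI^{-1}$ positive self-adjoint on $L^2(\Xi,{\rm d}\xi)$ and $\widetilde\cQ$ an isometry with trivial kernel; the uniqueness of the polar decomposition of $\overline{\cR}$ in the left form $T=PW$ (with $P\geq 0$ self-adjoint and $W$ a partial isometry satisfying $\ker W=\ker T$) then forces $\widetilde\cI^{-1}=|\overline{\cR}^{*}|$ and $\widetilde\cQ=\cQ$. Finally, if $\hat\pi$ is irreducible then $\operatorname{Im}\cQ$ is a nonzero closed $\hat\pi$-invariant subspace by~\eqref{intertwiningU}, hence equals $L^2(\Xi,{\rm d}\xi)$; so $\cQ$ is surjective and therefore unitary, providing the asserted equivalence of $\pi$ and $\hat\pi$. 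I expect the main technical obstacle to lie in handling the unbounded operators $|\overline{\cR}^{*}|$ and $\cI$ with sufficient care---in particular, making the domain assertions and the ``extension'' step $\cI\overline{\cR}\to\cQ$ rigorous---and in correctly invoking uniqueness of the polar decomposition in the closed, densely defined, unbounded setting without any appeal to the irreducibility of $\hat\pi$.
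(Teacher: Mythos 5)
Your proposal is correct and follows essentially the same route as the paper: polar decomposition $\overline{\cR}=\cQ|\overline{\cR}|$, Duflo--Moore (ii) to get the semi-invariant partial isometry, irreducibility of $\pi$ to kill $\ker\overline{\cR}$, and then a generalized inverse of $\cQ|\overline{\cR}|\cQ^*$ (your $|\overline{\cR}^{*}|$, the paper's $W$, whose Moore--Penrose inverse is exactly your functional-calculus inverse) to produce $\cI$, with surjectivity of $\cQ$ from the irreducibility of $\hat\pi$. The only cosmetic difference is that the paper verifies the semi-invariance of $W$ and $\cI$ and the domain identities by direct computation rather than by appealing to the dual polar decomposition and functional calculus.
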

The above result is a generalization of Helgason's theorem  on the
unitarization of the classical Radon transform, \cite[Theorem~4.1]{helgason99},
because by definition of extension it holds that 
    \begin{equation}
\cI \mathcal{R} f=\cQ f,\qquad f\in \cA .\label{eq:5}
\end{equation}

\begin{proof}
The unitarization  of $\cR$ is based on 
the polar decomposition $\overline{\mathcal{R}}=\cQ
|\overline{\mathcal{R}}|$  of $\overline{\mathcal{R}}$. By  Lemma~\ref{lem:Rbar} and
Theorem~\ref{duflomoore}, item~(ii),
$|\overline{\mathcal{R}}|\colon\operatorname{dom}(\overline{\mathcal{R}})\to
L^2(X,{\rm d}x)$ is a positive self-adjoint operator semi-invariant
with weight $|\chi|=\chi$, where $\chi$ is defined by \eqref{chi},  i.e.
\begin{equation}\label{intertwiningH}
\pi(g)|\overline{\mathcal{R}}|\pi(g)^{-1}=\chi(g)|\overline{\mathcal{R}}|,\qquad g\in G,
\end{equation}
and $\cQ\colon L^2(X,{\rm d}x)\to L^2(\Xi,{\rm d}\xi)$ is a
partial isometry with
  \[
    \ker{\cQ}=\ker{\overline{\cR}}, \qquad \operatorname{Im}\cQ=\overline{\operatorname{Im}(\overline{\mathcal{R}})},
  \]
and  is semi-invariant with weight $\chi/|\chi|\equiv1$, i.e.\ \eqref{intertwiningU} is satisfied.
Since $\pi$ is irreducible, $\ker{\cQ}=\{0\}$ and it follows that   $\cQ$ is an isometry.

Define $
  W=\cQ|\overline{\cR}|\cQ^*$ with $\hat\pi$-invariant  domain
  \[
    \operatorname{dom}W=\{f\in L^2(\Xi,{\rm d}\xi) : \cQ^*f\in
    \operatorname{dom}\overline{\mathcal{R}}\} =\cQ (
      \operatorname{dom}\overline{\mathcal{R}}) \oplus
\overline{\operatorname{Im}(\overline{\mathcal{R}})}^\perp,
  \]
which is a densely defined positive operator in $L^2(\Xi,{\rm d}\xi)
$, semi-invariant with   weight~$\chi$. Indeed,
$\cQ (\operatorname{dom}\overline{\mathcal{R}})$  is dense in $\cQ(
  L^2(X,{\rm d}x))=
  \overline{\operatorname{Im}(\overline{\mathcal{R}})}$ since
$\overline{\cR}$ is densely defined by Lemma~\ref{lem:Rbar}. Observe that the $\hat\pi$-invariance of $\operatorname{dom}W$ follows from the $\pi$-invariance of $\operatorname{dom}\overline{\mathcal{R}}$. Further, by   \eqref{intertwiningU} and \eqref{intertwiningH}  and using that $\pi(g)$ is a unitary operator we readily derive
  \[
  \begin{split}
\hat\pi(g)W\hat\pi(g)^{-1}f&=  \hat\pi(g)\cQ|\overline{\cR}|\cQ^*\hat\pi(g)^{-1}f\\
&=\left(\hat\pi(g)\cQ\pi(g)^{-1}\right)\left(\pi(g)|\overline{\cR}|\pi(g)^{-1}\right)\left(\pi(g)\cQ^*\hat\pi(g)^{-1}\right)f\\
&=\cQ\left(\chi(g)|\overline{\cR}|\right)\cQ^*f\\
&=\chi(g)Wf,
  \end{split}
  \]
for every $f\in \operatorname{dom}W$. 

Since $\cQ^*\cQ=\operatorname{Id}$, then
  $ \overline{\mathcal{R}}= W\cQ$ and
 $\operatorname{Im}\overline{\mathcal{R}}\subset \operatorname{Im}W$. 
We   denote by  $\mathcal{I}$ the Moore-Penrose inverse of $W$
  \cite[Chapter 9, \S 3, Theorem~2]{2003generalizedinverses} with 
densely defined domain given by 
    \[ \operatorname{Im} W \oplus
      \operatorname{Im} W^\perp \supset
      \operatorname{Im} W\cQ = 
      \operatorname{Im}\overline{\mathcal{R}}.
    \]
Since $W$ is a positive operator in $L^2(\Xi,{\rm d}\xi) $, then
$\mathcal I$ is positive, too, and
  \begin{align*}
  & \mathcal{I} W f= f, \qquad f\in \operatorname{dom}W\cap
    \ker{W}^\perp,\\
    & W \mathcal{I}  f= f, \qquad f\in \operatorname{Im}W.
    \end{align*}
We claim that $\mathcal{I}$ is
  semi-invariant with weight $\chi^{-1}$ and
  \[ \mathcal{I}\overline{\mathcal{R}} f= \cQ f, \qquad f\in
    \operatorname{dom}\overline{\mathcal{R}}.\]
Indeed, if $f\in \operatorname{Im}W$, by definition $\mathcal{I}f=h$ with $h\in \operatorname{dom}W\cap
    \ker{W}^\perp$ and $Wh=f$. Thus, by the semi-invariance of $W$ we have that 
    \begin{align}\label{intertwiningI}
    \nonumber\hat\pi(g)\cI\hat\pi(g)^{-1}f&=\hat\pi(g)\cI\hat\pi(g)^{-1} Wh\\
    \nonumber&= \chi(g)^{-1} \hat\pi(g)\cI W\hat\pi(g)^{-1}h \\
     &= \chi(g)^{-1} \cI f,
    \end{align}
where we used  that $\hat\pi(g)^{-1}h\in \ker{W}^\perp$, which follows from the $\hat\pi$-invariance of $\ker{W}$. If $f\in \operatorname{Im}W^\perp$, by definition of $\cI$ the semi-invariance property \eqref{intertwiningI} is trivial.

Finally, since by \eqref{character} $\overline{\mathcal{R}}$ is an
injective operator, we have that  
$ \ker{W}=\ker\cQ^*$ and hence   $\ker
W^\perp=\overline{\rm{Im}\cQ}\supset\rm{Im}\cQ$, whence  $\cQ
f\in\operatorname{dom}W\cap 
    \ker{W}^\perp$ for any
    $f\in \operatorname{dom}\overline{\mathcal{R}}$. Therefore
    $\mathcal{I}\overline{\mathcal{R}} f = \cI W\cQ f=\cQ f$, as desired.
    
Assume now that $\hat{\pi}$ is irreducible. Since $\operatorname{Im}(\cQ)$ is
a $\hat{\pi}$-invariant closed subspace of $L^2(\Xi,{\rm d}\xi)$ by
\eqref{intertwiningU}, then 
$\cQ$ is surjective, so that $\cQ$ is unitary and $\pi$ and
$\hat{\pi}$ are equivalent by~\eqref{intertwiningU}. 
\end{proof}

It is worth observing that  the results of this section do not depend on the choice of the subspace $\cA$. Indeed, suppose to have  another $\pi$-invariant subspace
  \begin{equation}
    \label{eq:2bis}
 \cA' \subseteq \{ f\in L^2(X,{\rm d}x) :   f(\sigma(\xi)[\cdot])\in
   L^1(\hat{\xi}_0,m_0) \text{ a.e. }\xi\in\Xi,\;\mathcal R f\in L^2(\Xi, {\rm d}\xi)\}
\end{equation}
of $L^2(X,{\rm d}x)$, $\cA'\neq \{0\}$, such that $\mathcal{R}$ restricted
to $\mathcal A'$ is a closable operator. We denote by
$\overline{\mathcal{R}'}$ the closure of the restriction of $\cR$
  to $\cA'$. We prove that $\overline{\mathcal{R}'}$ actually
coincides with $\overline{\mathcal{R}}$. By Lemma~\ref{lem:Rbar}, the
closure $\overline{\mathcal{R}'}$ is a densely defined operator
satisfying 
\begin{equation}
\overline{\mathcal{R}'}\pi(g)f =\chi(g)^{-1}\hat{\pi}(g)
\overline{\mathcal{R}'} f, \qquad
f\in\operatorname{dom}(\overline{\mathcal{R}'}). 
\end{equation}
Thus,
by Theorem~\ref{duflomoore}, letting $\overline{\mathcal{R}'}=\cQ'
|\overline{\mathcal{R}'}|$ be the polar decomposition of
$\overline{\mathcal{R}'}$, we have that
$|\overline{\mathcal{R}'}|\colon\operatorname{dom}(\overline{\mathcal{R}'})\to
L^2(X,\D x)$ is a positive selfadjoint operator semi-invariant with
weight $|\chi|=\chi$ defined by \eqref{chi}. Therefore,
$|\overline{\mathcal{R}'}|$ and $|\overline{\cR}|$ are
proportional by Theorem~\ref{duflomoore}, item~(i). This implies that their
domains, $\operatorname{dom}(\overline{\mathcal{R}'})$ and
$\operatorname{dom}(\overline{\mathcal{R}})$,  coincide.
Hence, $\overline{\mathcal{R}'}=\overline{\mathcal{R}}$. 

\begin{exSIM}
Applying Lemma~\ref{lem:Rbar} to $\cR^\mathrm{pol}$, \gray{by
\eqref{polarintertwining}} its closure $\overline{\cR^{\rm 
    pol}}$ is a semi-invariant operator from  $\cA^\mathrm{pol}$ to
$L^2([0,\pi)\times\R)$ with weight $\chi(b,\phi,a)=a^{-1/2}$. 
By Theorem~\ref{ernestoduflomoore}  there exists  a positive
selfadjoint operator $\cI\colon\operatorname{dom}(\cI)\supseteq
 \operatorname{Im}(\overline{\cR^{\mathrm{pol}}}) \to L^2([0,\pi)\times\R)$, semi-invariant with
weight $\chi(g)^{-1}=a^{1/2}$, such that $\cI
\overline{\cR^\mathrm{pol}}$ extends to a unitary operator $\cQ\colon L^2(\R^2)\to L^2([0,\pi)\times\R)$ intertwining the
quasi-regular (irreducible) representations $\pi$ and $\hat{\pi}$. Hence 
\begin{alignat}{2}\label{factorizationpol}
  & \cI\mathcal{R}^{\text{pol}}f=\cQ f &&\qquad f\in\cA^{\rm pol}, \\
  & \cQ^*\cQ f= f &&\qquad  f \in L^2(\R^2),\nonumber \\
   & \cQ \cQ^* F= F &&\qquad  F \in L^2([0,\pi)\times\R),\nonumber \\
& \hat{\pi}(g)\,\cQ\,\pi(g)^{-1}=\cQ &&\qquad g\in SIM(2). \nonumber 
\end{alignat}

We can provide an explicit formula for $\cI$. Consider the subspace
\[
\cD=\{f\in L^2([0,\pi)\times\R):\int_{[0,\pi)\times\R}|\tau||(I\otimes\cF) f(\theta,\tau)|^2\ {\rm d}\theta{\rm d}\tau<+\infty\}
\]
and define the operator $\cJ\colon\cD\to L^2([0,\pi)\times\R)$ by
\[
(I\otimes\cF)\cJ f(\theta,\tau)=|\tau|^{\frac{1}{2}}(I\otimes\cF) f(\theta,\tau),
\]
a Fourier multiplier with respect to the last variable. A direct calculation shows that
$\cJ$ is a densely defined positive self-adjoint 
injective operator  and is semi-invariant with weight $\zeta(g)=\chi(g)^{-1}=a^{1/2}$. By Theorem~\ref{duflomoore},  item~(i), there exists $c>0$ such that $\cI= c
\cJ$ and we now show that $c=1$.
Consider a function $f\in\cA^{\rm pol}\setminus\{0\}$. Then, by Plancherel theorem
and the Fourier slice theorem \eqref{eq:FST} we have that 
\begin{align*}
\|f\|^2=\|\cI\mathcal{R}^{\text{pol}}f\|^2_{L^2([0,\pi)\times\R)}&=c^2\|(I\otimes\cF)\cJ\mathcal{R}^{\text{pol}}f\|^2_{L^2([0,\pi)\times\R)}\\
&=c^2\, \int_{[0,\pi)\times\R}|\tau||(I\otimes\cF)\mathcal{R}^{\text{pol}}f(\theta,\tau)|^2\D\theta\D\tau\\
&=c^2\, \int_{[0,\pi)\times\R}|\tau||\cF f(\tau w(\theta))|^2\D\theta\D\tau\\
&=c^2\|f\|^2.
\end{align*}
Thus, we obtain $c=1$.
\end{exSIM}

\section{Inversion of the Radon transform}\label{inversionsection}

 In this section, we make explicit use of the assumption that $\pi$
  is square-integrable to invert the Radon transform.  We recall that,
  under this assumption, there exists a self-adjoint operator
  \[C\colon\operatorname{dom}{C}\subseteq L^2(X,{\rm d}x) \to L^2(X,{\rm d}x), \]
semi-invariant with weight $\Delta^{\frac{1}{2}}$, where
  $\Delta$ is the modular function of $G$,  such that for all $\psi\in
\operatorname{dom}{C}$ with $\|C\psi\|=1$, the voice transform
$\mathcal V_\psi$ 
\[
(\mathcal V_\psi f)(g)= \scal{f}{\pi(g)\psi}, \qquad g\in G,
 \]
 is an isometry from $L^2(X,{\rm d}x)$ into $L^2(G)$
 and  we
have the weakly-convergent reproducing formula 
\begin{align}\label{recon}
f=\int_G   (\mathcal V_\psi f)(g)   \pi(g)\psi\ {\rm d}\mu(g),
\end{align}
where $\mu$ is the Haar measure 
(see, for example, \cite[Theorem~2.25]{fuhr05}). The vector $\psi$ is
called admissible vector.

As shown in the previous section, there exists a
positive self-adjoint operator $\cI$ semi-invariant with weight
$\chi^{-1}$ such that $\cI\mathcal{R}$ extends to  a unitary operator
$\mathcal{Q}$, which intertwines the quasi-regular representations $\pi$
and $\hat\pi$ of $G$ on $L^2(X,{\rm d}x)$ and $L^2(\Xi,{\rm d}\xi)$
respectively.

Since $\cQ$ is unitary and satisfies \eqref{intertwiningU}, the voice transform reads
 \begin{equation}\label{reconstruction}
\mathcal V_\psi f(g)= \langle f, \pi(g)\psi\rangle=\langle \mathcal{Q}f, \mathcal{Q}\pi(g)\psi\rangle
=\langle \mathcal{Q}f,\hat{\pi}(g)\mathcal{Q}\psi\rangle,
\qquad
g\in G,
\end{equation}
for all $f\in L^2(X,{\rm d}x)$.  Furthermore, the assumption that  $\pi$ is square-integrable ensures that any $f\in L^2(X,{\rm d}x)$ can be
reconstructed from its unitary Radon transform $\mathcal{Q}f$ by means
of the reconstruction formula \eqref{recon}, which becomes
\[
f=\int_G  \langle \mathcal{Q}f,\hat{\pi}(g)\mathcal{Q}\psi\rangle\,  \pi(g)\psi\ {\rm d}\mu(g).
\]  
Moreover, if  we can choose $\psi$ in such a way that
  $\mathcal{Q}\psi$ is in the domain of the operator $\cI$, by \eqref{reconstruction}, for
  all $f\in\operatorname{dom}\overline{\mathcal{R}}$, we have
\begin{align}\label{reconstructiontwo}
\nonumber\mathcal V_\psi f(g)
&=\langle \mathcal{Q}f,\hat{\pi}(g)\mathcal{Q}\psi\rangle\\
\nonumber &=\langle \cI\overline{\mathcal{R}}f,\hat{\pi}(g)\mathcal{Q}\psi\rangle\\
\nonumber &=\langle\overline{\mathcal{R}}f,\cI\hat{\pi}(g)\mathcal{Q}\psi\rangle\\
&=\chi(g)\langle\overline{\mathcal{R}}f,\hat{\pi}(g)\cI\mathcal{Q}\psi\rangle,
\end{align}
where we use that $\cI$ is a selfadjoint operator, semi-invariant with
weight $\chi^{-1}$.

By \eqref{reconstructiontwo} the voice transform  $\mathcal V_\psi f$ depends on $f$ only through its Radon transform
$\overline{\mathcal{R}}f$. Therefore, \eqref{reconstructiontwo} together with~\eqref{recon} 
allow to reconstruct an unknown
signal $f\in  \operatorname{dom}\overline{\mathcal{R}}   $ from its Radon transform. 
Explicitly, we have derived the following inversion formula for the Radon transform.
\begin{teo}\label{generalinversionformula}
 Let $\psi\in
L^2(X,{\rm d}x)$ be an admissible vector for $\pi$ such that
$\cQ\psi\in\operatorname{dom}\cI$, and set $\Psi= \cI\mathcal{Q}\psi$.
Then, for any $f\in \operatorname{dom}\overline{\mathcal{R}}$,
 \begin{equation}
    \label{inversionformula}
  f =  \int_G \chi(g)
  \langle \overline{\mathcal{R}} f,\hat{\pi}(g)\Psi\rangle \,     \pi(g)\psi\ {\rm d}\mu(g),
  \end{equation}
  where the integral is weakly convergent,and 
  \begin{equation}\label{reconstructionenergy}
  \|f\|^2=\int_G\chi(g)^2|\langle\overline{\mathcal{R}}f,\hat{\pi}(g)\Psi\rangle|^2{\rm d}\mu(g).
  \end{equation}
  If, in addition,  $\psi\in \operatorname{dom}\overline{\mathcal{R}}$, then 
$
\Psi=\cI^2\overline{\mathcal{R}}\psi. 
$
  \end{teo}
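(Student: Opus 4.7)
The plan is to assemble the formula from the manipulations already carried out in the two displays preceding the statement, together with the reproducing formula~\eqref{recon}. The key point is to justify rigorously, for an arbitrary $f\in\operatorname{dom}\overline{\mathcal{R}}$, the passage of $\cI$ from one argument of the inner product to the other. First, I would observe that, since $\pi$ is square-integrable by Assumption~\ref{ass:irreducible} and $\psi$ is admissible, the reproducing formula~\eqref{recon} holds and the voice transform $\mathcal V_\psi$ is an isometry from $L^2(X,{\rm d}x)$ into $L^2(G)$.

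Next, for $f\in\operatorname{dom}\overline{\mathcal{R}}$ I would rewrite the voice transform along the chain indicated in~\eqref{reconstructiontwo}:
\[
\mathcal V_\psi f(g)
=\langle\cQ f,\hat\pi(g)\cQ\psi\rangle
=\langle\cI\overline{\mathcal{R}}f,\hat\pi(g)\cQ\psi\rangle
=\chi(g)\langle\overline{\mathcal{R}}f,\hat\pi(g)\Psi\rangle,
\]
where the first equality uses the unitarity of $\cQ$ and the intertwining relation~\eqref{intertwiningU}, the second uses~\eqref{eq:5}, and the third combines the self-adjointness of $\cI$ with the semi-invariance $\hat\pi(g)\cI\hat\pi(g)^{-1}=\chi(g)^{-1}\cI$ established in Theorem~\ref{ernestoduflomoore}. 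Substituting this expression into~\eqref{recon} yields the inversion formula~\eqref{inversionformula}, with weak convergence inherited from~\eqref{recon}; the energy identity~\eqref{reconstructionenergy} then follows from the Plancherel-type equality $\|\mathcal V_\psi f\|_{L^2(G)}=\|f\|$ together with the pointwise identity $|\mathcal V_\psi f(g)|^2=\chi(g)^2|\langle\overline{\mathcal{R}}f,\hat\pi(g)\Psi\rangle|^2$ just obtained. Finally, under the extra hypothesis $\psi\in\operatorname{dom}\overline{\mathcal{R}}$, applying~\eqref{eq:5} to $\psi$ gives $\cQ\psi=\cI\overline{\mathcal{R}}\psi$, and therefore $\Psi=\cI\cQ\psi=\cI^2\overline{\mathcal{R}}\psi$ directly from the definition of $\Psi$.

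The main obstacle is the careful bookkeeping of the domains of the unbounded operators $\overline{\mathcal{R}}$ and $\cI$. In particular one must verify that $\hat\pi(g)\cQ\psi\in\operatorname{dom}\cI$ for every $g\in G$, which follows from the $\hat\pi$-invariance of $\operatorname{dom}\cI$ (itself a consequence of the semi-invariance of $\cI$ noted in the proof of Theorem~\ref{ernestoduflomoore}), and that $\cI\overline{\mathcal{R}}f$ makes sense for all $f\in\operatorname{dom}\overline{\mathcal{R}}$, which is guaranteed by the inclusion $\operatorname{Im}\overline{\mathcal{R}}\subseteq\operatorname{dom}\cI$ built into the construction of $\cI$. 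Beyond these domain considerations, the argument is essentially a formal chaining of the identities already assembled in the preceding two sections.
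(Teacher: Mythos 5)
Your proposal is correct and follows essentially the same route as the paper, whose proof is precisely the derivation in the displays \eqref{reconstruction} and \eqref{reconstructiontwo} preceding the statement, combined with the reproducing formula \eqref{recon}. The only cosmetic difference is that the identity $\cQ f=\cI\overline{\mathcal{R}}f$ should be cited in the form established for all $f\in\operatorname{dom}\overline{\mathcal{R}}$ in the proof of Theorem~\ref{ernestoduflomoore}, rather than via \eqref{eq:5}, which is stated only for $f\in\cA$; your domain bookkeeping for $\cI$ is otherwise exactly what is needed.
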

Note that the datum  $\overline{\mathcal{R}}f$ is
  analyzed by the family   $\{ \hat{\pi}(g)\Psi\}_{g\in G}$ and the
  signal $f$ is  reconstructed by a different family, namely $\{ \pi(g)\psi\}_{g\in G}$.

  \begin{exSIM}
It is known that $\pi$ is square-integrable and the
      corresponding voice transform gives rise to $2D$-directional wavelets \cite{anmu96}. 
An admissible vector is a function $\psi\in L^2(\R^2)$ satisfying the following admissibility condition \cite{anmu96}
\begin{equation}\label{admcond}
\int_{[0,2\pi)\times\R^{+}}|\cF{\psi}(A_aR_{\phi}^{{-1}}\xi)|^2{\rm d}\phi\frac{{\rm
      d}a}{a} =1,\qquad \text{for all $\xi\in\R^2/\{0\}$},
\end{equation}
which is equivalent to
\begin{equation}\label{admcondsim2}
\int_{\R^2}\frac{|\mathcal{F}\psi(\xi_1,\xi_2)|^2}{\xi_1^2+\xi_2^2}\D
  {\xi_1}\D
  {\xi_2}=1.
\end{equation}
Given $f\in\cA^{\rm pol}$, define $\cG(b,\phi,a)=a^{\frac{1}{2}}
    \langle\mathcal{R}{^{\rm pol}}f,\hat{\pi}(b,\phi,a)\Psi\rangle $, i.e.\ by \eqref{simrepresentationhat}
    \[
      \cG
      (b,\phi,a)=
     \int_{[0,\pi)\times\R }
      \mathcal{R}^{\text{pol}}f(\theta,t)\, \overline{\Psi\left(\theta-\phi\
        \text{mod}\ \pi,\frac{t-b\cdot \nt(\theta)}{a}\right)} {\rm
        d}\theta{\rm d}t.
    \]
    Then, taking into account that
    $\chi(b,\phi,a)=a^{-\frac{1}{2}}$,  \eqref{inversionformula} reads
    \begin{equation}\label{reconstructionsim}
  f (x) = \int_{\R^2\rtimes ([0,2\pi)\times\R^{+})}   \cG(b,\phi,a)
  \, \psi\big(R_{\phi}^{-1}\frac{x-b}{a}\big) \,
        \D b  \D \phi \frac{\D a}{a^5}.
\end{equation}
By \eqref{reconstructionenergy}, reconstruction formula \eqref{reconstructionsim} is equivalent to 
    \begin{equation}\label{polarizationsim}
\|f\|^2= \int_{\R^2\rtimes ([0,2\pi)\times\R^{+})} | \cG(b,\phi,a)|^2{\rm d}b{\rm d}\phi\frac{{\rm d}a}{a^5}.
  \end{equation} 
The idea to exploit the theory of the continuous wavelet transform to derive inversion formulae for the Radon transform is not new, we refer to \cite{holschneider91,berenstein-walnut-1994, madych-1999,walnut-1993,olson-destefano-1994}--to name a few. 

It is possible to obtain a version of  \eqref{polarizationsim} in which the scale parameter $a$ varies only in a compact set. Consider a smooth function $\Phi\in L^1(\R^2)\cap L^2(\R^2)$ such that 
\begin{equation}\label{Phi}
|\cF{\Phi}(\xi)|^2+\int_{[0,2\pi)\times(0,1)}|\cF{\psi}(A_aR_{\phi}^{-1}\xi)|^2{\rm d}\phi\frac{{\rm d}a}{a}=1.
\end{equation}
By Plancherel theorem, we have that 
\begin{align}\label{firstequality}
\nonumber\int_{\R^2}|\langle f,T_{b}\Phi\rangle|^2\ {\rm d}b&=\int_{\R^2}\left|\int_{\R^2}\cF f(\xi)\overline{\cF \Phi(\xi)}e^{2\pi ib\cdot\xi}\ {\rm d}\xi\right|^2\ {\rm d}b\\
\nonumber&=\int_{\R^2}|\cF^{-1}(\cF f\ \overline{\cF \Phi})(b)|^2\ {\rm d}b\\
&=\int_{\R^2}|\cF f(\xi)|^2|\cF \Phi(\xi)|^2\ {\rm d}\xi.
\end{align}
Using an analogous computation, by Plancherel theorem, equation \eqref{simrepresentationfrequency} and Fubini's theorem we have 
\begin{align}\label{secondequality}
\nonumber&\int_{\R^2\rtimes ([0,2\pi)\times(0,1))}| \cG(b,\phi,a)|^2{\rm d}b{\rm d}\phi\frac{{\rm d}a}{a^5}=\int_{\R^2\rtimes ([0,2\pi)\times(0,1))}|\langle f,\pi(b,\phi,a)\psi\rangle|^2{\rm d}b{\rm d}\phi\frac{{\rm d}a}{a^3}\\
\nonumber&=\int_{\R^2\rtimes ([0,2\pi)\times(0,1))}\left|\int_{\R^2} \cF f(\xi)\overline{\cF\psi(A_aR_{\phi}^{-1}\xi)}e^{2\pi ib\cdot\xi}\ {\rm d}\xi\right|^2{\rm d}b{\rm d}\phi\frac{{\rm d}a}{a}\\
\nonumber&=\int_{[0,2\pi)\times(0,1)}\left(\int_{\R^2}|\cF^{-1}(\cF f\overline{\cF\psi(A_aR_{\phi}^{-1}\cdot)})(b)|^2{\rm d}b\right){\rm d}\phi\frac{{\rm d}a}{a}\\
&=\int_{\R^2}|\cF f(\xi)|^2\left(\int_{[0,2\pi)\times(0,1)}|\cF\psi(A_aR_{\phi}^{-1}\xi)|^2{\rm d}\phi\frac{{\rm d}a}{a}\right){\rm d}\xi.
\end{align}
Thus, combining equations \eqref{Phi}, \eqref{firstequality} and \eqref{secondequality} we obtain the reconstruction formula 
\begin{equation}\label{reconstructionsim2}
\|f\|^2=\int_{\R^2}|\langle f,T_{b}\Phi\rangle|^2{\rm d}b+\int_{\R^2\rtimes ([0,2\pi)\times(0,1))}| \cG(b,\phi,a)|^2{\rm d}b{\rm d}\phi\frac{{\rm d}a}{a^5}.
\end{equation}

It is worth observing that there always exists a function $\Phi$
  satisfying~\eqref{Phi} provided that the admissible vector $\psi$
has fast Fourier decay. Indeed, if we require $\cF\psi$ to satisfy a
decay estimate of the form   
\[
|\cF{\psi}(\xi)|=O(|\xi|^{-L}),\qquad \text{for every $L>0$,}
\]
then, by \eqref{admcond} we have that 
\[
\begin{split}
z(\xi)&:=1 -
 \int_{[0,2\pi)\times(0,1)}|\cF{\psi}(A_aR_{\phi}^{-1}\xi)|^2{\rm d}\phi\frac{{\rm
  d}a}{a} \\
&=\int_{[0,2\pi)\times[1,+\infty)}|\cF{\psi}(A_aR_{\phi}^{-1}\xi)|^2{\rm d}\phi\frac{{\rm d}a}{a}\\
&\lesssim\int_{[0,2\pi)\times[1,+\infty)}a^{-2L}|\xi|^{-2L}\frac{{\rm d}a}{a}{\rm d}\phi\\
&\lesssim|\xi|^{-2L}.
\end{split}
\]
 Therefore, there exists a
smooth function $\Phi$ such that $\cF \Phi(\xi)=\sqrt{z(\xi)}$, so
that~\eqref{Phi} holds true. 

Finally, let us show that the first term in the right hand side of \eqref{reconstructionsim2} may be expressed in terms of $\cR^{\mathrm{pol}} f$ only, if we suppose that $\Phi\in\cA^{\rm pol}$.  We readily derive 
\begin{align}\label{coefficinetsPhi}
\nonumber\langle f,T_{b}\Phi\rangle= \langle f,
  \pi(b,0,1)\Phi\rangle&=\langle \cQ f, \cQ \pi(b,0,1)\Phi\rangle\\
\nonumber&=\langle \cQ  f,\hat{\pi}(b,0,1)\cQ \Phi\rangle\\
\nonumber&=\langle \cI\cR^{\rm pol} f,\hat{\pi}(b,0,1)\cI\cR^{\rm pol}\Phi\rangle\\
&=\langle \cR^{\rm pol} f,\hat{\pi}(b,0,1)\cI^2\cR^{\rm pol}\Phi\rangle,
\end{align}
where we observe that  $\cI\cR^{\rm pol}\Phi$ is always in the
domain of the operator $\cI$ since
\begin{align*}
\int_{[0,\pi)\times\R}|\tau||(I\otimes\cF)\cI\cR^{\text{pol}}\Phi(\theta,\tau)|^2{\rm d}\theta{\rm d}\tau&=\int_{[0,\pi)\times\R}|\tau|^2|(I\otimes\cF)\cR^{\text{pol}}\Phi(\theta,\tau)|^2{\rm d}\theta{\rm d}\tau\\
&=\int_{[0,\pi)\times\R}|\tau|^2|\cF \Phi(\tau w(\theta))|^2{\rm d}\theta{\rm d}\tau\\
&=\int_{\R^2}|\xi||\cF \Phi(\xi)|^2{\rm d}\xi<+\infty,
\end{align*} 
since by definition $\Phi$ is a smooth function. Therefore, reconstruction formula  \eqref{reconstructionsim2} reads
\[
\|f\|^2=\int_{\R^2}|\langle \cR^{\rm pol} f,\hat{\pi}(b,0,1)\cI^2\cR^{\rm pol}\Phi\rangle|^2{\rm d}b+\int_{\R^2\rtimes ([0,2\pi)\times(0,1))}| \cG(b,\phi,a)|^2{\rm d}b{\rm d}\phi\frac{{\rm d}a}{a^5},
\]
where all the coefficients depend on $f$ only through its polar Radon transform.

 It is worth observing that the domain of 
$\overline{\cR^{\text{pol}}}$ is related to the domain of $C$, which defines  the admissible vectors
of $\pi$.  By  Theorem \ref{duflomoore}, (ii), the
operator $|\overline{\cR^{\text{pol}}}|$ is a positive self-adjoint
operator semi-invariant with weight $\chi(b,\phi,a)=a^{-1/2}$, which is a power of the
modular function
$\Delta(b,\phi,a)=a^{-2}$,
i.e. $\chi(b,\phi,a)=\Delta(b,\phi,a)^{1/4}$. On the other hand,  $C$  is a positive self-adjoint
operator  semi-invariant with weight
$\Delta^{1/2}$
and  is such that $\psi\in L^2(\R^2)$ is an admissible vector of
the square-integrable representation $\pi$ if and only if
$\psi\in\operatorname{dom}C$ and $\|C\psi\|=1$. Therefore,
$|\overline{\cR^{\text{pol}}}|$ and $C$ are both positive
self-adjoint operators on $L^2(\R^2)$  semi-invariant
with a power of the modular function of $SIM(2)$ as weight. Finally,
consider the subspace  
\[
\cD_s=\{f\in L^2(\R^2): \int_{\R^2}|\xi|^{2s}|\cF f(\xi)|^2 \D\xi<+\infty\}
\]
of $L^2(\R^2)$. It is not difficult to verify that the Fourier multiplier $A_s\colon\cD_s\to L^2(\R^2)$ defined by
\begin{equation}\label{fouriermultipliers}
\cF A_sf(\xi)=|\xi|^{s}\cF f(\xi) 
\end{equation}
is a densely defined positive self-adjoint operator and 
is semi-invariant with weight
$\chi_s(b,\phi,a)=\Delta(b,\phi,a)^{-s/2}=a^s$. Thus, by
Theorem~\ref{duflomoore}, (i), the operators
$|\overline{\cR^{\text{pol}}}|$ and $C$ 
are given, up to a constant, by \eqref{fouriermultipliers} with
$s=-1/2$ and $s=-1$, respectively. The above argument explains
why the domain of $\cR^{\text{pol}}$ and the domain of $C$, and thus
the admissibility condition \eqref{admcond} of $\pi$, are strictly
related. A similar result can be proved for the examples
  illustrated in Section \ref{examples}. 
\end{exSIM}  

\section{Examples}\label{examples}

In this section, we illustrate two additional examples.

\subsection{The affine Radon transform and the shearlet transform}

\subsubsection{Groups and spaces}

The (parabolic) shearlet group $\mathbb{S}$ is the semidirect product
of $\R^2$ with the closed subgroup $K=\{N_sA_a\in {\rm GL}(2,\R):s\in\R,a\in\R^{\times}\}$ where 
\[
N_s=\left[\begin{matrix}1 & -s\\ 0 & 1\end{matrix}\right],\qquad A_a=a\left[\begin{matrix}1 & 0\\ 0 & |a|^{-1/2}\end{matrix}\right].
\]
We can identify the element $N_sA_a$ with the pair $(s,a)$ and we write $(b,s,a)$ for the elements in $\mathbb{S}$. With this identification the product law amounts to 
\[
(b,s,a)(b',s',a')=(b+N_sA_ab',s+|a|^{1/2}s',aa')
\]
and the inverse of $(b,s,a)$ is given by 
\[
(b,s,a)^{-1}=(-A_a^{-1}N_s^{-1}b,-|a|^{-1/2}s,a^{-1}).
\]
A left Haar measure of $\mathbb{S}$ is 
\[
{\rm d}\mu(b,s,a)=|a|^{-3}{\rm d}b{\rm d}s{\rm d}a, 
\]
with ${\rm d}b$, ${\rm d}s$ and ${\rm d}a$ the Lebesgue measures on $\R^2$, $\R$ and $\R^{\times}$, respectively. The shearlet group acts transitively on $\Xi=\R\times\R$ by the action 
\[
(b,s,a)^{-1}.(v,t)=\left(|a|^{-1/2}(v-s),\frac{t-n(v)\cdot b}{a}\right)
\]
where $n(v)={^t(1,v)}$. The isotropy at $\xi_0=(0,0)$ is 
\[
H=\{((0, b_2),0,a): b_2\in\R,a\in\R^{\times}\},
\]
so that $\Xi=\mathbb{S}/H$. It is immediate to verify that the Lebesgue measure $\D\xi=\D v\D t$ is a relatively invariant measure on $\Xi$ with positive character $\beta(b,s,a)=|a|^{3/2}$. Now, we consider the sections $s\colon\R^2\to \mathbb{S}$ and $\sigma\colon\R\times\R\to\mathbb{S}$ defined by
\[
s(x)=(x,0,1),\qquad \sigma(v,t)=((t,0),v,1).
\]
Thus, we have that 
\begin{align*}
\hat{\xi}_0&=H[x_0]=\{(0, b_2): b_2\in\R\}\simeq\R,\\
 \check{x}_0&=K.\xi_0=\{(s,0):s\in\R\}\simeq\R.
\end{align*}
It is easy to check that the Lebesgue measure $\D b_2$ on  $\hat{\xi}_0$ is a relatively $H$-invariant measure with $\gamma((0, b_2),0,a)=|a|^{1/2}$. Further, we can compute 
\[
\widehat{(v,t)}=\sigma(v,t)[\hat{\xi}_0]=\{x\in\R^2:x\cdot n(v)=t\},
\]
which is the set of all points laying on the line of equation $x\cdot n(v)=t$ and 
\[
\check{x}=s(x).\check{x}_0=\{(v,t)\in \R\times\R:t-n(v)\cdot x=0\},
\]
which parametrizes the set of all lines passing through the point $x$ except the horizontal one. Thus, the maps $x\mapsto\check{x}$ and $(v,t)\mapsto\widehat{(v,t)}$ are both injective. Therefore, $X=\R^2$ and $\Xi=\R\times\R$ are homogeneous spaces in duality.

\subsubsection{The representations}

The (parabolic) shearlet group $\mathbb{S}$ acts on $L^2(\R^2)$ via the shearlet representation, namely
\begin{equation}\label{shearletrepresentation}
\pi(b,s,a)f(x)=|a|^{-3/4}f(A_a^{-1}N_s^{-1}(x-b)).
\end{equation}
It is well known that the shearlet representation is
irreducible \cite{dahlke2008}.

Furthermore, since $\beta(b,s,a)=|a|^{3/2}$, the group $\mathbb{S}$ acts on $L^2(\R\times\R,\D v\D t)$ by means of the quasi-regular representation $\hat{\pi}$ defined by
\begin{equation}\label{shearletrepresentationhat}
\hat{\pi}(b,s,a)F(v,t)=|a|^{-\frac{3}{4}}F\left(|a|^{-1/2}(v-s),\frac{t-n(v)\cdot b}{a}\right).
\end{equation}
By Mackey imprimitivity theorem \cite{folland16}, one can show
that also $\hat{\pi}$ is irreducible.

\subsubsection{The Radon transform}

 By \eqref{generalradon}, the Radon transform between the homogeneous spaces in duality $\R^2$ and $\R\times\R$ is defined as 
\begin{equation}\label{radonaff}
\mathcal{R}^{\text{aff}}f(v,t)=\int_{\R}f(t-vy,y){\rm d}y,
\end{equation}
which is the so-called affine Radon transform
\cite{coeagula10,gr11}. 

Following the same arguments as in Example \ref{ex:SIM}, 
we define
\[
\cA^{\rm aff}=\{f\in L^1(\R^2)\cap L^2(\R^2):\int_{\R^2}\frac{|\cF f(\xi)|^2}{|\xi_1|}\D\xi<+\infty\},
\]
where $\xi=(\xi_1,\xi_2)\in\R^2$, 
which is $\pi$-invariant and is such that $\cR^{\text{aff}}f\in
L^2(\R\times\R)$ for all $f\in\cA^{\rm aff}$  (we refer to
\cite{bardemadeviodo} for more details). Furthermore, as in Example \ref{ex:SIM},
  it is easy to show that $\cR^{\rm aff}$, regarded as operator from
  $\cA^{\rm aff}$ to $L^2(\R\times\R)$, is closable.
  
\subsubsection{The Unitarization theorem}
Since $\beta(b,s,a)={|a|^{3/2}}$, $\gamma(b,s,a)=|a|^{1/2}$ and $|\det(N_sA_a)|=|a|^{3/2}$ the affine Radon transform satisfies the intertwining property
\[
\mathcal{R}^{\text{aff}}\pi(b,s,a)=\chi(b,s,a)^{-1}\hat{\pi}(b,s,a)\mathcal{R}^{\text{aff}},
\]
where $\chi(b,\phi,a)^{-1}=|a|^{1/2}$.

By Lemma~\ref{lem:Rbar}, the closure $\overline{\cR^{\rm aff}}$
of  the affine Radon transform is a semi-invariant operator
 with weight
$\chi(b,s,a)=|a|^{-1/2}$. Therefore, by
Theorem~\ref{ernestoduflomoore},
there exists a positive selfadjoint operator $\cI\colon
  \operatorname{dom}(\cI)\subseteq L^2(\R\times\R)\to
  L^2(\R\times\R)$ semi-invariant with weight $\zeta(g)=\chi(g)^{-1}=|a|^{1/2}$
  such that $\cI\mathcal{R}^{\text{aff}}$ extends to a unitary operator
  $\cQ$ from $L^2(\R^2)$ onto $L^2(\R\times\R)$,  which intertwines
  the quasi-regular (irreducible) representations $\pi$ and
  $\hat{\pi}$. Reasoning as in Example~\ref{ex:SIM}, it is possible to
  show that
   the operator $\cI$ is defined by
\[
(I\otimes\cF)\cI f(\omega,\tau)=|\tau|^{\frac{1}{2}}(I\otimes\cF) f(\omega,\tau),\qquad f\in\cD,
\]
where
\[
\cD=\{f\in L^2(\R\times\R):\int_{\R\times\R}|\tau||(I\otimes\cF) f(\omega,\tau)|^2\ {\rm d}\omega{\rm d}\tau<+\infty\}.
\]

\subsubsection{The inversion formula}

It is known that the shearlet representation $\pi$ is
square-integrable and its admissible vectors are the functions $\psi$ in
$L^2(\R^2)$ satisfying  
\begin{equation}\label{admissibleconditionshearlet}
\int_{\R^2}\frac{|\cF\psi(\xi)|^2}{|\xi_1|^2}\D\xi=1,
\end{equation}
where $\xi=(\xi_1,\xi_2)\in\R^2$ \cite{dahlke2008}.
The shearlet transform is then $\cS_{\psi}f(b,s,a)=\langle f,\pi(b,s,a)\psi\rangle$, and is a multiple of an isometry from $L^2(\R^2)$ into $L^2(\mathbb{S},{\rm d}\mu)$ provided that $\psi$ satisfies the admissible condition \eqref{admissibleconditionshearlet}. 
By Theorem~\ref{generalinversionformula}, for any $f\in\cA^{\rm aff}$
we have the reconstruction formula 
\begin{equation}\label{reconstructionformulashearlet}
 f =  \int_{\R^2\times\R\times\R^{\times}}
 \cS_{\psi}f(b,s,a) \,     \pi(b,s,a)\psi\ \frac{\D b\D s\D a}{|a|^3},
\end{equation}
where the coefficients $ \cS_{\psi}f(b,s,a)$ are given by
\[
\cS_{\psi}f(b_1,b_2,s,a)=|a|^{-5/4}\int_{\R\times\R}\cR^{\text{aff}}f(v,t) \overline{\Psi\left(\frac{v-s}{|a|^{1/2}},\frac{t-n(v)\cdot b}{a}\right)}\D v\D t.
\]
If we choose $\Psi$ such that $\Psi(v,t)=\Psi_2(v)\Psi_1(t)$, then 
\begin{equation}\label{shearletcoefficients}
\cS_{\psi}f(b_1,b_2,s,a)=|a|^{-3/4}\int_{\R}\cW_{\Psi_1}(\cR^{\text{aff}}f(v,\cdot))(n(v)\cdot b,a) \overline{\Psi_2\left(\frac{v-s}{|a|^{1/2}}\right)}\D v,
\end{equation}
provided that $\Psi_1$ is a 1D-wavelet.

This argument gives an alternative proof of Theorems 8 and
10 in \cite{bardemadeviodo}, where it is also proved that formulas \eqref{shearletcoefficients} can actually be extended to the whole $L^1(\R^2)\cap L^2(\R^2)$ and it is not difficult to verify that this extension works in the same way for  Example \ref{ex:SIM}. Formula \eqref{reconstructionformulashearlet} is a continuous version of the reconstruction formula presented  in \cite[Theorem~3.3]{coeagula10}. 

\subsection{The spherical means Radon transform}

\subsubsection{Groups and spaces}

Take the same group $G=SIM(2)$ as in Example~\ref{ex:SIM},
namely $G=\R^2\rtimes K$, with $K=\{R_\phi A_a\in \text{GL}(2,\R):\phi\in
[0,2\pi),\;a\in\R^+\}$.  We consider the space
$\Xi=\R^2\times \R^+$, which we think of as parametrizing centers and radii of circles in
$\R^2$, with the action 
\begin{equation}\label{eq:actioncircle}
(b,\phi,a).(\ct,r)=(b+aR_\phi \ct,ar).
\end{equation}
An immediate calculation shows that the isotropy at $\xi_0=((1,0),1)$ is
\[
H=\{\bigl((1-\cos\phi,-\sin\phi),\phi,1\bigr):\phi\in [0,2\pi)\}.
\]
By direct computation, recalling that $x_0=0$,
\[
\hat\xi_0=H[x_0]=\{(1-\cos\phi,-\sin\phi):\phi\in [0,2\pi)\}
\]
is the circle with center $(1,0)$ and radius $1$ and
\[
\check{x}_0=K.\xi_0=\{((a\cos{\phi},a\sin{\phi}),a):\phi\in[0,2\pi),\, a\in\R^+\}
\]
is the family of circles passing through the origin.  The measure $\D m_0=\D\phi$ is $H$-invariant on $\hat\xi_0$, since the action of $H$ on $\hat\xi_0$ is given by a simple rotation of a fixed angle. This gives $\gamma(h)\equiv 1$.

We define the section $\sigma\colon\Xi\to SIM(2)$ by $\sigma(\ct,r)=(\ct-(r,0),0,r)$. Thus, for $\xi=(\ct,r)\in\Xi$ we have
\begin{equation}\label{eq:circles}
\hat\xi=\sigma(\ct,r)[\hat\xi_0]=\{\ct-r\nt(\phi):\phi\in [0,2\pi)\},
\end{equation}
namely, the circle with center $\ct$ and radius $r$ and, for $x\in\R^2$ we have
\[
\check{x}=s(x).\check{x}_0=\{(x+(a\cos{\phi},a\sin{\phi}),a):\phi\in[0,2\pi),\, a\in\R^+\},
\]
that is, the family of circles passing through the point $x$. It is
easy to see that the maps $x\mapsto\check{x}$ and $\xi\mapsto\hat\xi$
are both injective. Thus, $X=\R^2$ and $\Xi=\R^2\times \R^+$ are
homogeneous spaces in duality. 

We now fix a relatively invariant
measure on $\Xi$: as we will show, this requires some care. Given $\alpha\in\R$,
 we have
\[
\int_{\R^2\times \R^+} f\left((b,\phi,a)^{-1}.(\ct,r)\right)\D \ct\frac{\D r}{r^\alpha}=a^{3-\alpha}\int_{\R^2\times \R^+} f(\ct,r)\D \ct\,\frac{\D r}{r^\alpha},
\]
so that the measure $\D\xi=\D \ct\frac{\D r}{r^\alpha}$ is a
relatively invariant measure on $\Xi$ with character
$\beta(b,\phi,a)=a^{3-\alpha}$.

\subsubsection{The representations}
Since the group $G$ is the same as in Example~\ref{ex:SIM}, the representation $\pi$ is given by
\eqref{simrepresentation},  whereas we have to compute the
quasi-regular representation $\hat\pi$ acting on $L^2(\Xi,\D\xi)$. Since
$\beta(b,\phi,a)=a^{3-\alpha}$, by \eqref{eq:inverseSIM} and
\eqref{eq:actioncircle} we have 
\begin{equation}\label{eq:hatcircle}
\begin{split}
\hat{\pi}(b,\phi,a)F(\ct,r)&=a^{\frac{\alpha-3}{2}}F((-A_a^{-1}R_{\phi}^{-1}b,-\phi\ \text{mod}\ 2\pi, a^{-1}).(\ct,r))\\
&=a^{\frac{\alpha-3}{2}}F( a^{-1}R_{-\phi}(\ct-b),a^{-1}r),
\end{split}
\end{equation}
which is irreducible by Mackey imprimitivity theorem \cite{folland16}. 

\subsubsection{The Radon transform}

By \eqref{eq:circles} and \eqref{generalradon}, the Radon trasform in this case is given  by
\[
\mathcal{R}^{\rm cir}f(\ct,r)=\int_0^{2\pi}f(\ct-r\nt(\phi))\D\phi,
\]
namely, the integral of $f$ over the circle of center $\ct$ and radius
$r$.  This is the so-called spherical means Radon transform
\cite{2008-kuchment}. It is worth observing that more interesting
problems arise when the available  centers and radii are restricted to
some hypersurface: this does not easily fit into our assumptions, and
it is left for future investigation. 

Let us now determine a suitable $\pi$-invariant subspace $\cA^{\rm
  cir}$ of $L^2(\R^2)$ as in \ref{ass:A}. In order to do that, it is
useful to derive a Fourier slice theorem for $\mathcal{R}^{\rm
  cir}$. For $f\in L^1(\R^2)\cap L^2(\R^2)$, by Fubini's theorem and
using \cite[Eq.~10.9.1]{NIST:DLMF}, we have 
\[
\begin{split}
(\cF\otimes I) \cR^{\rm cir} f(\tau,r)&=\int_0^{2\pi}\int_{\R^2} f(\ct-r\nt(\phi))e^{-2\pi i \ct\cdot\tau}\D \ct\D\phi\\
&=\int_0^{2\pi} e^{-2\pi i r \nt(\phi)\cdot\tau}\D\phi \int_{\R^2} f(\ct)e^{-2\pi i \ct\cdot\tau}\D \ct \\
&=2\pi J_0(2\pi|\tau|r)\cF f(\tau),
\end{split}
\]
where $J_0$ is the Bessel function of the first kind. As a consequence, by Plancherel theorem, recalling that $\D\xi=\D \ct\frac{\D r}{r^{\alpha}}$ we obtain
\[
\norm{\cR^{\rm cir} f}_{L^2(\Xi,\D\xi)}^2=
\norm{(\cF\otimes I) \cR^{\rm cir} f}_{L^2(\Xi,\D \tau\frac{\D r}{r^{\alpha}})}^2=c_\alpha \int_{\R^2}|\cF f(\tau)|^2|\tau|^{\alpha-1}\,\D\tau,
\]
where 
\begin{equation}\label{calpha}
c_\alpha=(2\pi)^{\alpha+1}\int_{\R^+}\frac{|J_0(r)|^2}{r^\alpha}\D r.
\end{equation}
Observe that $c_\alpha$ is finite if and only if  $\alpha\in
  (0,1)$, so that from now on we assume that $\alpha\in
  (0,1)$ and we set
\[
\cA^{\rm cir}_{\alpha}=\{f\in L^1(\R^2)\cap L^2(\R^2): \int_{\R^2}|\cF f(\tau)|^2|\tau|^{\alpha-1}\,\D\tau<+\infty\},
\]
which is $\pi$-invariant and is such that $\cR^{\text{cir}}f\in
L^2(\Xi,\D\xi)$ for all $f\in\cA^{\rm cir}_{\alpha}$. Furthermore, as in Example~\ref{ex:SIM},
  it is easy to show that $\cR^{\rm cir}$, regarded as operator from
  $\cA^{\rm cir}_{\alpha}$ to $L^2(\Xi,\D\xi)$, is
  closable. We stress that, if $\alpha\notin (0,1)$,  the set
    \[ \{ f\in L^2(X,\D x): \cR^{\rm cir}f \in L^2(\Xi,\D\xi)\} =\{0\},\]
i.e.\ it is trivial. This motivates  the role of~Assumption~\ref{ass:A} in
our construction.

\subsubsection{The Unitarization theorem}
  
By $\eqref{intertwining}$ and $\eqref{chi}$ we have that 
\[
\mathcal{R}^{\text{cir}}\pi(b,\phi,a)=a^\frac{1-\alpha}{2}\hat{\pi}(b,\phi,a)\mathcal{R}^{\text{cir}},
\]
since $\beta(b,\phi,a)=a^{3-\alpha}$, $\gamma(b,\phi,a)=1$ and
$\det(R_\phi A_a)=a^2$, and so $\chi(b,\phi,a)=a^\frac{\alpha-1}{2}$.  

By Theorem~\ref{ernestoduflomoore}, there exists a positive self-adjoint operator $\cI$, semi-invariant with weight $a^\frac{1-\alpha}{2}$, such that $\cI\cR^{\rm cir}$ extends to a unitary operator $\cQ\colon L^2(\R^2)\to L^2(\Xi,\D\xi)$. Moreover, $\cQ$ intertwines $\pi$ and $\hat\pi$, namely
\begin{equation*}
\hat{\pi}(b,\phi,a)\,\cQ\,\pi(b,\phi,a)^{-1}=\cQ,
\qquad (b,\phi,a)\in SIM(2).
\end{equation*}
As in the other examples, by using Theorem~\ref{duflomoore}, part (i), it is possible to show that there exists a constant $k_\alpha\in\R^+$ such that $\cI=k_\alpha\cJ$ with
\[
(\cF\otimes I)\cJ f(\tau,r)=|\tau|^{\frac{1-\alpha}{2}}(\cF\otimes I) f(\tau,r),\qquad f\in\cD,
\]
where
\[
\cD=\{f\in L^2(\Xi,\D\xi):\int_{\R^2\times\R^+}|\tau|^{1-\alpha}|(\cF\otimes I) f(\tau,r)|^2\ {\rm d}\tau\frac{\D r}{r^\alpha}<+\infty\}.
\]
Using the same argument as in Example \ref{ex:SIM}, it is possible to determine the constant $k_\alpha$. Take a function $f\in \cA^{\rm cir}_\alpha\setminus\{0\}$. By Plancherel theorem and the Fourier slice theorem obtained for $\cR^{\rm cir}$ we have that
\begin{align*}
\|f\|^2=\|\cI\mathcal{R}^{\text{cir}}f\|^2_{L^2(\R^2\times\R^+)}&=k_\alpha^2\|(\cF\otimes I)\cJ\mathcal{R}^{\text{cir}}f\|^2_{L^2(\R^2\times\R^+)}\\
&=k_{\alpha}^2\, \int_{\R^2\times\R^+}|\tau|^{1-\alpha}|(\cF\otimes I)\mathcal{R}^{\text{cir}}f(\tau,r)|^2\D\tau\frac{\D r}{r^\alpha}\\
&=k_\alpha^2 c_\alpha\|f\|^2,
\end{align*}
where $c_\alpha$ is given by \eqref{calpha}.
Thus, we obtain that $k_\alpha=c_\alpha^{-1/2}$.

\subsubsection{The inversion formula}

Applying Theorem~\ref{generalinversionformula} to $\cR^{\rm cir}$ we obtain the inversion formula for $f\in\cA^{\rm cir}_{\alpha}$
 \begin{equation*}
  f =  \int_{SIM(2)}  a^{\frac{\alpha-9}{2}}
  \langle \mathcal{R}^{\rm cir} f,\hat\pi(b,\phi,a)\Psi\rangle_{L^2(\Xi,\D\xi)} \,     \psi(R_{-\phi}\frac{x-b}{a})\ \D b\D \phi\D a,
  \end{equation*}
  where we used that $\chi(b,\phi,a)=a^\frac{\alpha-1}{2}$, the expression for the Haar measure of $SIM(2)$ given in \eqref{eq:haarSIM} and the expression for the representation $\pi$ given in \eqref{simrepresentation}. 

\bibliographystyle{abbrv}
\bibliography{biblio2}
\end{document}